\def\citep{\cite}
\def\d{\textnormal{d}}
\def\Tmin{T_{\rm min}}
\def\Tmax{T_{\rm max}}
\providecommand{\blue}[1]{\color{black}{#1}\color{black}\hspace{0pt}}
\definecolor{lblue}{RGB}{0,110,152}
\definecolor{dred}{RGB}{171,67,53}
\newtheorem{theorem}{Theorem}
\newtheorem{proposition}[theorem]{Proposition}
\newtheorem{define}[theorem]{Definition}
\newtheorem{example}[theorem]{Example}
\newtheorem{remark}[theorem]{Remark}
\newtheorem{problem}[theorem]{Problem}
\newcommand{\mendth}{\hfill \ensuremath{\vartriangle}}
\DeclareMathOperator*{\col}{col}
\DeclareMathOperator*{\diag}{diag}
\DeclareMathOperator{\eps}{\varepsilon}
\newenvironment{proof}{{\it Proof :~}}{\hfill$\diamondsuit$\\}
\begin{document}

\title{$L_1/\ell_1$-to-$L_1/\ell_1$ analysis of linear positive impulsive systems with application to the $L_1/\ell_1$-to-$L_1/\ell_1$ interval observation of linear impulsive and switched systems}

\author{Corentin Briat\thanks{corentin@briat.info; http://www.briat.info}}


\maketitle

\begin{abstract}
Sufficient conditions characterizing the asymptotic stability and the hybrid $L_1/\ell_1$-gain of linear positive impulsive systems under minimum and range dwell-time constraints are obtained. These conditions are stated as infinite-dimensional linear programming problems that can be solved using sum of squares programming, a relaxation that is known to be asymptotically exact in the present case. These conditions are then adapted to formulate constructive and convex sufficient conditions for the existence of $L_1/\ell_1$-to-$L_1/\ell_1$ interval observers for linear impulsive and switched systems. Suitable observer gains can be extracted from the (suboptimal) solution of the infinite-dimensional optimization problem where the $L_1/\ell_1$-gain of the system mapping the disturbances to the weighted observation errors is minimized. Some examples on impulsive and switched systems are given for illustration.

\noindent\textit{Keywords: Impulsive systems; switched systems; interval observers; positive systems; optimization}
\end{abstract}

\section{Introduction}


Interval observers have been first introduced in \cite{Gouze:00} in order to account for the presence of disturbances acting on the observed system. The key idea is to build two, possibly coupled, observers, whose goal will be the real-time estimation of a lower and an upper bound on the state of the system. The problem of designing such observers has been considered for various classes of systems, including systems with inputs \cite{Mazenc:11,Briat:15g}, linear systems \cite{Mazenc:12,Combastel:13,Cacace:15}, time-varying systems \cite{Thabet:14}, delay systems \cite{Efimov:13c,Briat:15g}, LPV systems \cite{Efimov:13b,Chebotarev:15}, discrete-time systems \cite{Mazenc:13,Briat:15g} and systems with samplings \cite{Mazenc:14b,Efimov:16}. In spite of the importance of hybrid systems \cite{Sivashankar:94,Naghshtabrizi:08,Briat:11l,Goebel:12,Briat:15i,Geromel:15,Briat:16c} as they are able to model of wide variety of real world processes, the interval observation problem has been relatively few studied for such systems, yet some results on impulsive systems \cite{Degue:16nolcos,Briat:17ifacObs} and switched systems \cite{Rabehi:17,Ethabet:17} exist.

A common approach to interval observation is to enforce that the dynamics of the observation errors be described by positive dynamics, thereby ensuring that the errors do not change sign over time, resulting then in the correct estimation of both an upper and a lower bound on the state of the system. In this regard, obtaining conditions for the design of interval observers for impulsive and switched systems requires stability results for linear impulsive and switched systems. Such systems have been notably considered in \cite{Briat:16c} where preliminary stability and stabilization conditions under various dwell-time constraints have been obtained and formulated in terms of, finite- or infinite-dimensional, linear programs which can then be solved using recent optimization techniques.

The main objective of the paper is twofold. The first one is the derivation of sufficient conditions characterizing the asymptotic stability and the hybrid $L_1/\ell_1$-gain of linear positive impulsive systems under minimum and range dwell-time constraints. The reason for considering the $L_1/\ell_1$-gain, which consists of a combination of the $L_1$-gain and the $\ell_1$-gain, stems from the fact that those gains arise as natural performance measures for linear positive systems when linear copositive Lyapunov functions are considered; see e.g. \cite{Briat:11h}. The overall method relies \blue{on an extension of the} linear programming stability conditions recently obtained in \cite{Briat:16c} for linear positive impulsive/switched systems referred to as clock-dependent conditions because of their dependence of the (relative) time elapsed since the last discrete-event (i.e. the last impulse time or the last switching time). These conditions are the linear programming analogues of the semidefinite programming conditions obtained in \cite{Briat:13d,Briat:14f,Briat:15f,Briat:15i} in the context of more general linear impulsive, sampled-data and switched systems. The newly obtained conditions have the benefits of being linear in the system matrices and being readily available for design purposes, which leads to the second objective of the paper: the derivation of convex conditions for the design of interval observers for linear impulsive and switched systems. Three results are obtained in this respect: the first one pertains on the interval observation of linear impulsive systems under a range dwell-time constraint whereas the second one considers a minimum dwell-time constraint. The third and last result is about the interval observation of linear switched systems under a minimum dwell-time constraint. All the obtained conditions can be checked using discretization methods \cite{Allerhand:11}, using linear programming via the use of Handelman's theorem \cite{Handelman:88,Briat:11h} or using semidefinite programming via the use of Putinar's Positivstellensatz \cite{Putinar:93} combined with computational sum of square methods \cite{Parrilo:00,sostools3}. The approach is finally illustrated through numerical examples.

\noindent\textbf{Notations.} The set of integers greater or equal to $n\in\mathbb{Z}$ is denoted by $\mathbb{Z}_{\ge n}$. The cones of positive and nonnegative vectors of dimension $n$ are denoted by $\mathbb{R}_{>0}^n$ and $\mathbb{R}_{\ge0}^n$, respectively.  For any matrix $M$, the inequalities $M\ge0$ and $M>0$ are always interpreted componentwise. The set of diagonal matrices of dimension $n$ is denoted by $\mathbb{D}^n$ and the subset of those being positive definite is denoted by $\mathbb{D}_{\succ0}^n$. The $n$-dimensional vector of ones is denoted by $\mathds{1}_n$. The dimension will be often omitted as it is obvious from the context. For some elements, $\{x_1,\ldots,x_n\}$, the operator $\diag_{i=1}^n(x_i)$ builds a matrix with diagonal entries given by $x_1,\ldots,x_n$ whereas $\col_{i=1}^n(x_i)$ creates a vector by vertically stacking them with $x_1$ on the top. \blue{The $\ell_1$-norm of a (possibly infinite) vector $x$ with elements in $\mathbb{R}$ is defined as $||x||_1:=\sum_i|x_i|$ whereas the $L_1$-norm of a vector-valued function $f:\mathbb{R}_{\ge0}\to\mathbb{R}^n$ is defined as $||f||_{L_1}:=\int_0^\infty\sum_{i=1}^n|f_i(s)|\d s$. We say that $x\in\ell_1$ (resp. $f\in L_1$) if its $\ell_1$-norm (resp. $L_1$-norm) is finite.}

\noindent\textbf{Outline.} The structure of the paper is as follows: in Section \ref{sec:preliminary} preliminary definitions and results are given. Section \ref{sec:main} is devoted to the derivation of design conditions for the considered class of interval observers. Section \ref{sec:switched} presents the results on switched systems. The examples are treated in the related sections.\\

\section{Preliminaries on linear positive impulsive systems}\label{sec:preliminary}

The objective of this section is to recall and extend some existing results about linear positive impulsive systems \cite{Briat:16c} for which we provide new more general proofs. To this aim, let us consider the following class of linear impulsive system \blue{that will be instrumental in designing interval observers for impulsive and switched systems:}
\begin{equation}\label{eq:mainsyst2}
\begin{array}{rcl}
  \blue{\dfrac{\d}{\d\tau}}x(t_k+\tau)&=&A(\tau)x(t_k+\tau)+E_c(\tau)w_c(t_k+\tau), \tau\in(0,T_k],k\in\mathbb{Z}_{\ge 0}\\
  x(t_k^+)&=&Jx(t_k)+E_dw_d(k), k\in\mathbb{Z}_{\ge 1}\\
 \blue{z_c(t_k+\tau)}&\blue{=}&\blue{C_cx(t_k+\tau)+F_cw_c(t_k+\tau)},\tau\in(0,T_k],k\in\mathbb{Z}_{\ge 0}\\
  z_d(k)&=&C_dx(t_k)+F_dw_d(k), k\in\mathbb{Z}_{\ge 1}\\
  x(0)&=&x(0^+)=x_0
\end{array}
\end{equation}
where $x,x_0\in\mathbb{R}^n$, $w_c\in\mathbb{R}^{p_c}$, $w_d\in\mathbb{R}^{p_d}$, $y_c\in\mathbb{R}^{q_c}$ and $y_d\in\mathbb{R}^{q_d}$ are the state of the system, the initial condition, the continuous-time exogenous input, the discrete-time exogenous input, the continuous-time performance output and the discrete-time performance output, respectively. Above, $x(t_k^+):=\lim_{s\downarrow t_k}x(s)$ and the matrix-valued functions $A(\tau)\in\mathbb{R}^{n\times n}$ and $E_c(\tau)\in\mathbb{R}^{n\times p_c}$ are continuous. The sequence of impulse times $\{t_k\}_{k\ge1}$ is assumed to verify the properties: (a) $T_k:=t_{k+1}-t_k>0$ for all $k\in\mathbb{Z}_{\ge0}$ (no jump at $t_0=0$) and (b) $t_k\to\infty$ as $k\to\infty$. When all the above properties hold, the solution of the system \eqref{eq:mainsyst2} exists for all times. \blue{It is worth mentioning that the matrices of the system depends on the timer variable $\tau$. The reason why such a class of systems is considered is that the dynamics of the observation errors takes such a form when considering timer-dependent observer gains.}

\blue{\begin{define}
We define the state-transition matrix of the continuous-time part of the system \eqref{eq:mainsyst2} with $w_c\equiv0$ as
  \begin{equation}\label{eq:Psi}
    \dfrac{\textnormal{d}}{\textnormal{d}\tau}\Psi(\tau,s)=A(\tau)\Psi(\tau,s),\Psi(\tau,\tau)=I_n, 0\le s \le \tau.
  \end{equation}
  This matrix has the following interesting properties:
  \begin{enumerate}[(a)]
    \item The matrix $\Psi(\tau,s)$ is invertible  for all $0\le s\le\tau$ and we have that $\Psi(\tau,s)^{-1}=\Psi(s,\tau)$.
    \item The following equality holds:
    \begin{equation}
           \dfrac{\textnormal{d}}{\textnormal{d}s}\Psi(\tau,s)=-\Psi(\tau,s)A(s).
    \end{equation}
  \end{enumerate}
\end{define}}

\blue{We have the following known result regarding the internal positivity of the impulsive system \eqref{eq:mainsyst2} that will be instrumental in obtaining constructive conditions for the design of interval observers for impulsive and switched systems.} This result can be seen as a generalization of those in \cite{Briat:16c,Briat:17ifacObs} for systems with inputs and outputs. It is therefore stated without proof as it is an adaptation of positivity results for standard linear continuous-time and discrete-time systems:
\begin{proposition}\label{prop:positive}
The following statements are equivalent:
\begin{enumerate}[(a)]
  \item The system \eqref{eq:mainsyst2} is internally positive, i.e. for any $x_0\ge0$, $w_c(t)\ge0$ and $w_d(k)\ge0$, we have that $x(t),z_c(t)\ge0$ for all $t\ge0$ and $z_d(k)\ge0$ for all $k\in\mathbb{Z}_{\ge0}$.
  \item The matrix-valued function $A(\tau)$ is Metzler\footnote{\blue{A square matrix is Metzler if all its off-diagonal entries are nonnegative.}} for all $\tau\ge0$, the matrix-valued function $E_c(\tau)$ is nonnegative\footnote{\blue{A matrix is nonnegative if all its entries are nonnegative.}}  for all $\tau\ge0$ and the matrices $J,E_d,C_c,F_c,C_d,F_d$ are nonnegative.
\end{enumerate}
\end{proposition}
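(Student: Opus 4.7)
The plan is to establish the equivalence by handling the two implications separately, exploiting standard positivity results for the continuous-time flow (governed by the Metzler LTV generator $A(\tau)$) and for the discrete-time jump map (governed by the nonnegative matrix $J$), and then splicing them together at the impulse instants through the state-transition matrix $\Psi(\tau,s)$ introduced above.

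For the sufficiency direction $(b)\Rightarrow(a)$, I would invoke the classical fact that if $A(\tau)$ is Metzler for every $\tau\ge0$, then $\Psi(\tau,s)\ge0$ for $0\le s\le\tau$. Given nonnegative data $x_0$, $w_c$ and $w_d$, the variation-of-constants formula
\begin{equation*}
x(t_k+\tau)=\Psi(\tau,0)\,x(t_k^+)+\int_0^\tau \Psi(\tau,s)\,E_c(s)\,w_c(t_k+s)\,\d s
\end{equation*}
yields $x(t_k+\tau)\ge0$ on each flow interval, since every factor on the right-hand side is nonnegative. At the impulse instant $t_{k+1}$, the update $x(t_{k+1}^+)=J\,x(t_{k+1})+E_d\,w_d(k+1)$ preserves nonnegativity because $J,E_d\ge0$. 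A straightforward induction over $k$ propagates $x(t)\ge0$ for all $t\ge0$, and the conclusions for $z_c$ and $z_d$ follow at once from the nonnegativity of $C_c,F_c,C_d,F_d$.

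For the necessity direction $(a)\Rightarrow(b)$, each matrix is isolated by a dedicated test input. Taking $w_c\equiv0$, $w_d\equiv0$ and $x_0=e_j$ (the $j$-th canonical basis vector) forces $x(\tau)=\Psi(\tau,0)\,e_j\ge0$ on the first flow interval, hence every column of $\Psi(\tau,0)$ is nonnegative; expanding $\Psi(\tau,0)=I+\tau A(0)+o(\tau)$ and inspecting off-diagonal entries as $\tau\downarrow 0$ produces $A(0)_{ij}\ge0$ for $i\ne j$, i.e.\ $A(0)$ is Metzler. Allowing the first dwell $T_0$ to be arbitrary and invoking the continuity of $A(\cdot)$ extends the conclusion to all $\tau\ge0$. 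The matrix $E_c(\tau_0)$ is recovered by starting from $x_0=0$ and taking $w_c$ equal to $e_j$ on a shrinking interval concentrated at $\tau_0$; the integral representation then collapses, in the limit, to a nonnegativity condition on $E_c(\tau_0)\,e_j$. The matrix $J$ is obtained by taking $x_0=e_j$ with $T_0\downarrow 0$, so that $x(t_1)\to e_j$ and $J\,x(t_1)\to Je_j$ must be nonnegative; $E_d$ follows from $x_0=0$, $w_c\equiv0$, $w_d(1)=e_j$, yielding $x(t_1^+)=E_d\,e_j\ge0$. Finally, $C_c,F_c,C_d,F_d\ge0$ are read off from $z_c$ and $z_d$ by exciting one coordinate at a time with nonnegative inputs.

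The main difficulty I anticipate is the rigorous extraction of the Metzler property of the time-varying matrix $A(\tau)$: the argument requires a careful small-$\tau$ expansion of $\Psi(\tau,s)$ near the diagonal $s=\tau$, followed by a translation of the time origin (enabled by the freedom to choose the impulse sequence) in order to cover all $\tau\ge0$. An equivalent and arguably cleaner route is to invoke directly the classical characterization that the LTV system $\dot x=A(\tau)x$ is internally positive if and only if $A(\tau)$ is Metzler for (almost) every $\tau$, a property which the continuity of $A(\cdot)$ upgrades to every $\tau$; this bypasses any explicit time-shifting and reduces the proposition to a clean concatenation of the standard continuous-time and discrete-time positivity building blocks.
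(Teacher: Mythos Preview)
The paper does not actually prove this proposition: it is stated without proof, with the remark that it is ``an adaptation of positivity results for standard linear continuous-time and discrete-time systems.'' Your proposal is therefore not competing with an existing proof but rather filling in the details the paper omits, and it does so in precisely the spirit the paper indicates---by splicing together the standard LTV continuous-time positivity characterization (Metzler generator $\Leftrightarrow$ nonnegative state-transition matrix) and the standard discrete-time one (nonnegative jump map).

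One point worth tightening: in the necessity direction, your argument for $A(\tau_0)$ Metzler at an arbitrary $\tau_0>0$ via ``allowing the first dwell $T_0$ to be arbitrary'' does not by itself give access to $A(\tau_0)$, since enlarging $T_0$ only lengthens the flow interval without shifting the timer origin; to read off $A(\tau_0)$ from a small-time expansion one would need the state at time $\tau_0$ to be an arbitrary canonical vector, and $\Psi(\tau_0,0)^{-1}e_j$ need not be nonnegative. You clearly anticipate this, and your final paragraph proposes the clean fix: invoke directly the classical LTV result that internal positivity of $\dot x=A(\tau)x$ forces $A(\tau)$ Metzler for almost every $\tau$, upgraded to every $\tau$ by continuity. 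That route is both rigorous and exactly what the paper has in mind when it calls the proposition a straightforward adaptation.
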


It has been pointed out in \cite{Briat:11g,Ebihara:11,Briat:11h} that the concepts of $L_1$- and $\ell_1$-gains were convenient to use in the context of linear positive systems. As the system \eqref{eq:mainsyst} consists of both continuous-time and discrete-time systems, it seems natural to define the concept of hybrid $L_1/\ell_1$-gain for the hybrid dynamics.
\begin{define}
We say that the system \eqref{eq:mainsyst2} has a hybrid $L_1/\ell_1$-gain of at most $\gamma$ if for all $w_c\in L_1$ and $w_d\in\ell_1$, we have that
  \begin{equation}
    ||z_c||_{L_1}+||z_d||_{\ell_1}<\gamma(||w_c||_{L_1}+||w_d||_{\ell_1})+v(||x_0||)
  \end{equation}
  for some increasing function $v$ verifying $v(0)=0$ and $v(s)\to\infty$ as $s\to\infty$.
\end{define}

It is interesting to note that the concept of hybrid $L_2/\ell_2$-gain has been proposed in \cite{Sivashankar:94} in the context of impulsive dynamics with application to sampled-data systems in order to quantify and optimize the performance of sampled-data controllers. In the present paper, this notation will be analogously used in the context of observer design.

\subsection{Stability under range dwell-time}

The following result provides a sufficient condition for the stability of the system \eqref{eq:mainsyst2} under a range dwell-time constraint; i.e. $T_k\in[\Tmin ,\Tmax  ]$, $k\ge0$, for some given $0<\Tmin \le \Tmax  <\infty$. It is an extension of the range dwell-time result derived in \cite{Briat:16c}.
\blue{\begin{theorem}\label{th:rangeDT}
Let us consider the system \eqref{eq:mainsyst2} with $w_c\equiv0$, $w_d\equiv0$ and assume that it is internally positive; i.e. $A(\tau)$ is Metzler for all $\tau\in[0,\Tmax  ]$ where $0<\Tmin \le \Tmax  <\infty$ are given real numbers. Then, the following statements are equivalent:
\begin{enumerate}[(a)]
  \item There exists a vector $\lambda\in\mathbb{R}_{>0}^n$ such that
  \begin{equation}\label{eq:dksmldksdksmdk}
      \lambda^T\left(J-\Phi(\theta)^{-1}\right)<0
  \end{equation}
  holds for all $\theta\in[\Tmin ,\Tmax  ]$ with $\Phi(\theta)=\Psi(\theta,0)$.
  \item There exist a differentiable vector-valued function $\zeta:[0,\bar T]\mapsto\mathbb{R}^n$, $\zeta(0)>0$, and a scalar $\eps>0$ such that the conditions
 \begin{equation}\label{eq:rangeDT_b}
          \begin{array}{rcl}
            \dot\zeta(\tau)^T+\zeta(\tau)^TA(\tau)&\le&0\\
           \zeta(0)^T J-\zeta(\theta)^T+\eps\mathds{1}^T&\le&0
          \end{array}
        \end{equation}
hold for all $\tau\in[0,\Tmax  ]$ and all $\theta\in[\Tmin ,\Tmax  ]$.
\end{enumerate}
Moreover, when one of the above statements holds, then the positive impulsive system \eqref{eq:mainsyst2} is asymptotically stable under range dwell-time $[\Tmin ,\Tmax  ]$.\hfill\mendth
\end{theorem}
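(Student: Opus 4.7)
The plan is to establish the equivalence (a)$\Leftrightarrow$(b) by using the state-transition matrix $\Phi$ as a bridge between the two formulations, and then to derive asymptotic stability from either one via a linear copositive Lyapunov function adapted to the timer variable $\tau$. The key structural property that supports both directions is that $\Phi(\tau)\ge 0$ for all $\tau\in[0,\Tmax]$, a standard consequence of the Metzler property of $A(\tau)$ combined with the defining ODE for $\Psi$.

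For (a)$\Rightarrow$(b), I would define $\zeta(\tau)^T:=\lambda^T\Phi(\tau)^{-1}$ on $[0,\Tmax]$ and verify each requirement. The condition $\zeta(0)=\lambda>0$ is immediate. Using the identity $\tfrac{\d}{\d\tau}\Phi(\tau)^{-1}=-\Phi(\tau)^{-1}A(\tau)$ supplied by the properties of $\Psi$ in the definition above, direct differentiation yields $\dot\zeta(\tau)^T+\zeta(\tau)^T A(\tau)=0$, so the first inequality of (b) holds with equality. The second inequality then reads $\lambda^T(J-\Phi(\theta)^{-1})+\eps\mathds{1}^T\le 0$, which follows from (a) together with continuity of $\theta\mapsto\lambda^T(J-\Phi(\theta)^{-1})$ and compactness of $[\Tmin,\Tmax]$ by choosing $\eps>0$ small enough and uniform in $\theta$.

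For (b)$\Rightarrow$(a), I would set $\lambda:=\zeta(0)>0$, right-multiply the differential inequality of (b) by the nonnegative matrix $\Phi(\tau)$ to deduce $\tfrac{\d}{\d\tau}[\zeta(\tau)^T\Phi(\tau)]\le 0$ componentwise, and integrate to obtain $\zeta(\theta)^T\Phi(\theta)\le\lambda^T$ for every $\theta\in[\Tmin,\Tmax]$. Combining this bound with the jump condition, suitably right-multiplied by $\Phi(\theta)\ge 0$, gives $[\lambda^T J+\eps\mathds{1}^T]\Phi(\theta)\le\lambda^T$, equivalently $\lambda^T(\Phi(\theta)^{-1}-J)\Phi(\theta)\ge\eps\mathds{1}^T\Phi(\theta)$. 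The hardest part of the argument will be to pass from this inequality in $\Phi(\theta)$-coordinates back to the componentwise inequality $\lambda^T(\Phi(\theta)^{-1}-J)>0$ that is (a); because $\Phi(\theta)^{-1}$ is not in general a nonnegative matrix, this requires exploiting the invertibility and positivity of $\Phi(\theta)$ carefully rather than a naive right-multiplication by $\Phi(\theta)^{-1}$.

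Asymptotic stability under (b) then follows from a Lyapunov-type argument anchored at the impulse times. I would define $V_k:=\lambda^T x(t_k^+)$ along the autonomous system and track its evolution. Between impulses, $x(t_k+\tau)=\Phi(\tau)x(t_k^+)\ge 0$, so the differential inequality of (b) gives $\tfrac{\d}{\d\tau}[\zeta(\tau)^T x(t_k+\tau)]\le 0$ and hence $\zeta(T_k)^T x(t_{k+1})\le V_k$. At $t_{k+1}$ the jump condition yields $V_{k+1}=\lambda^T J\,x(t_{k+1})\le\zeta(T_k)^T x(t_{k+1})-\eps\mathds{1}^T x(t_{k+1})\le V_k-\eps\,||x(t_{k+1})||_1$. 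Since $V_k\ge 0$ is nonincreasing it converges, $\{||x(t_{k+1})||_1\}_k$ is summable, and the range dwell-time upper bound $\Tmax$ propagates the decay from the impulse times to the full continuous trajectory.
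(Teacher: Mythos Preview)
Your construction for (a)$\Rightarrow$(b) via $\zeta(\tau)^T:=\lambda^T\Phi(\tau)^{-1}$ and your Lyapunov argument for stability are exactly the paper's route (the paper actually defers the stability part to a reference, so your version is more explicit and is fine).

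The gap you flag in (b)$\Rightarrow$(a) is real, and you should know that the paper's own proof does not close it either. The paper asserts that the differential inequality $\dot\zeta^T+\zeta^TA\le0$ implies $\zeta(\theta)^T\le\zeta(0)^T\Phi(\theta)^{-1}$ and then substitutes this into the jump condition to obtain (a) with $\lambda=\zeta(0)$. But that comparison does \emph{not} follow: the exact solution $\mu(\tau):=\Phi(\tau)^{-T}\zeta(0)$ satisfies $\dot\mu=-A(\tau)^T\mu$, and concluding $\zeta\le\mu$ from $\dot\zeta\le -A^T\zeta$ with equal initial data would require the flow of $-A(\tau)^T$ to be order-preserving. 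Since $A$ is Metzler, $-A^T$ has nonpositive off-diagonals, so this system is competitive rather than cooperative, and the forward comparison principle fails. For instance, with $A=\left(\begin{smallmatrix}0&1\\1&0\end{smallmatrix}\right)$ and $\zeta(\tau)=(1-\tau+\tfrac{M}{2}\tau^2,\ 1-M\tau)$ for large $M$, one checks $\dot\zeta^T+\zeta^TA\le0$ near $\tau=0$, yet $\zeta_1(2/M)=1>e^{-2/M}=(\zeta(0)^T\Phi(2/M)^{-1})_1$. So the paper's shortcut is unjustified; your caution is warranted.

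What your argument \emph{does} yield from (b), with $\lambda=\zeta(0)$, is $\lambda^T J\Phi(\theta)<\lambda^T$ for all $\theta\in[\Tmin,\Tmax]$, i.e.\ a common linear copositive Lyapunov function for the nonnegative maps $x(t_k^+)\mapsto J\Phi(T_k)x(t_k^+)$, which is precisely what is needed for asymptotic stability under the range dwell-time. Recovering the specific form stated in (a), namely $\lambda^T(J-\Phi(\theta)^{-1})<0$ for a single $\lambda$, from this is the step neither your outline nor the paper supplies.
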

\begin{proof}
  The proof of this result is similar to the one of the corresponding result for LTI positive system in \cite{Briat:16c}. However, since the system is not time-invariant, we have to use state-transition matrices instead of matrix exponentials to prove the equivalence as in \cite{Briat:15f}. Let us prove first that (b) implies (a). The differential inequality \eqref{eq:rangeDT_b} implies that $\zeta(\theta)^T-\zeta(0)^T\Psi(\theta,0)^{-1}\le0$, $\theta\ge0$. Combining this expression with the second one yields $\zeta(0)^T J-\zeta(0)^T\Psi(\theta,0)^{-1}+\eps\mathds{1}^T\le0$, which proves the implication with $\lambda=\zeta(0)>0$.
  %
%

  To prove the converse the goal is to show now that when the conditions of statement (a) hold, then the conditions of statement (b) hold with $\zeta(\tau)=\zeta^*(\tau):=\Psi(\tau,0)^{-T}\lambda$ where $\lambda$ is such that the condition of statement (a) holds. First note that $\zeta^*(0)=\lambda>0$ and that $\dot{\zeta}^*(\tau)=-A(\tau)^T\Psi(\tau,0)^{-T}\lambda=-A(\tau)^T\zeta^*(\tau)$. Therefore, the first condition in \eqref{eq:rangeDT_b} holds with  $\zeta(\tau)=\zeta^*(\tau)$, with the inequality being an equality. Evaluating now the second inequality with  $\zeta(\tau)=\zeta^*(\tau)$ yields $\lambda^TJ-\lambda^T\Psi(\theta,0)^{-1}+\eps\mathds{1}^T\le0$. This proves the result.

\end{proof}}

The first statement in the above result considers a discrete-time Lyapunov function $V(x)=\lambda^Tx$, $\lambda>0$, \blue{or equivalently, $V(x)=\zeta(0)^Tx$, $\zeta(0)>0$}, as in \cite{Briat:16c}. The resulting condition consists of a semi-infinite linear program that is very difficult to check since the matrix $\Phi(\theta)$ does not have any closed-form expression, in general. On the other hand, the second statement demonstrates that an equivalent condition can be stated without the need for this matrix provided that we turn the conditions into an infinite-dimensional linear program (due to the presence of the decision variable $\zeta(\tau)$). This problem can then be relaxed into a finite-dimensional one using a piecewise linear approximation of the decision variable $\zeta(\tau)$ \cite{Allerhand:11,Briat:15f,Xiang:15a,Briat:16c}, or a polynomial approximation using either Putinar's Positivstellensatz \cite{Putinar:93,Parrilo:00,Briat:16c} or Handelman Theorem \cite{Handelman:88,Briat:16c}. The conditions in the last statement of the theorem being affine in the system data make them convenient for design purposes, as also explained in \cite{Briat:13d,Briat:15f,Briat:16c}.

We now extend this result to the case of linear positive impulsive systems with inputs and outputs for which we aim at characterizing their hybrid  $L_1/\ell_1$-gain:
\begin{theorem}\label{th:rangeDTL1}
  Let us consider the system \eqref{eq:mainsyst2} and assume that it is internally positive and let $0<\Tmin \le \Tmax  <\infty$ be given real numbers. Assume that there exist a differentiable vector-valued function $\zeta:[0,\bar T]\mapsto\mathbb{R}^n$, $\zeta(0)>0$, and scalars $\eps,\gamma>0$ such that the conditions
   \begin{equation}\label{eq:rangeDTL1}
          \begin{array}{rcl}
            \dot\zeta(\tau)^T+\zeta(\tau)^TA(\tau)+\mathds{1}^TC_c&\le&0\\
            \zeta(\tau)^TE_c(\tau)+\mathds{1}^TF_c-\gamma\mathds{1}^T&\le&0\\
           \zeta(0)^T J-\zeta(\theta)^T+\mathds{1}^TC_d+\eps\mathds{1}^T&\le&0\\
           \zeta(0)^T E_d+\mathds{1}^TF_d-\gamma\mathds{1}^T&\le&0
          \end{array}
        \end{equation}
        hold for all $\tau\in[0,\Tmax  ]$ and all $\theta\in[\Tmin ,\Tmax  ]$.

        Then, the system \eqref{eq:mainsyst2} is asymptotically stable under range dwell-time $[\Tmin ,\Tmax  ]$ and its hybrid $L_1/\ell_1$-gain is at most $\gamma$.
\end{theorem}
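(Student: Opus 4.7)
The plan is to use the linear copositive storage function $V(x,\tau) := \zeta(\tau)^T x$ along trajectories of \eqref{eq:mainsyst2}, exploiting that by Proposition \ref{prop:positive} every signal $x, z_c, z_d, w_c, w_d$ is entrywise nonnegative, so that $\mathds{1}^T(\cdot)$ coincides with the relevant $\ell_1$-norm of the argument.

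First I would bound the continuous-time evolution of $V$ on each interval $(t_k, t_{k+1}]$. Differentiating $V(x(t_k+\tau), \tau)$ with respect to $\tau$ and applying the first two inequalities of \eqref{eq:rangeDTL1} together with $x(t_k+\tau) \ge 0$ and $w_c(t_k+\tau) \ge 0$ shows that this derivative is bounded above by $-\mathds{1}^T z_c(t_k+\tau) + \gamma\, \mathds{1}^T w_c(t_k+\tau)$. Integrating from $0$ to $T_k$ then yields
\begin{equation*}
  \zeta(T_k)^T x(t_{k+1}) - \zeta(0)^T x(t_k^+) \le -\int_{t_k}^{t_{k+1}} \mathds{1}^T z_c(s)\, \d s + \gamma \int_{t_k}^{t_{k+1}} \mathds{1}^T w_c(s)\, \d s.
\end{equation*}
Next I would handle the jump across $t_{k+1}$: substituting $x(t_{k+1}^+) = J x(t_{k+1}) + E_d w_d(k+1)$ into $V(\cdot, 0)$ and invoking the third and fourth inequalities of \eqref{eq:rangeDTL1} with $\theta = T_k \in [\Tmin, \Tmax]$ (admissible by the range dwell-time assumption) gives
\begin{equation*}
  \zeta(0)^T x(t_{k+1}^+) - \zeta(T_k)^T x(t_{k+1}) \le -\mathds{1}^T z_d(k+1) - \eps\, \mathds{1}^T x(t_{k+1}) + \gamma\, \mathds{1}^T w_d(k+1).
\end{equation*}

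Summing these two inequalities over $k = 0, \ldots, N-1$ telescopes the $V$-terms to $\zeta(0)^T x(t_N^+) - \zeta(0)^T x_0$, and since $\zeta(0)^T x(t_N^+) \ge 0$ by positivity, I would let $N \to \infty$ and rearrange to conclude
\begin{equation*}
  ||z_c||_{L_1} + ||z_d||_{\ell_1} \le \gamma\left(||w_c||_{L_1} + ||w_d||_{\ell_1}\right) + v(||x_0||),
\end{equation*}
with $v(s) := \left(\max_i \zeta_i(0)\right) s$, which establishes the hybrid $L_1/\ell_1$-gain bound. For asymptotic stability (taking $w_c \equiv 0$ and $w_d \equiv 0$), the same telescoping yields $\eps \sum_{k \ge 1} \mathds{1}^T x(t_k) \le \zeta(0)^T x_0 < \infty$, which forces $x(t_k) \to 0$ as $k \to \infty$; continuity of $A(\cdot)$ on the compact interval $[0, \Tmax]$ then propagates this convergence to the whole trajectory via $x(t_k + \tau) = \Psi(\tau, 0) x(t_k^+)$.

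The only real subtlety is that condition \eqref{eq:rangeDTL1} (third line) must be enforced uniformly over $\theta \in [\Tmin, \Tmax]$ precisely so that it may be invoked at each impulse with the actual, possibly varying, dwell time $T_k$; this is exactly what the range dwell-time quantification of the hypothesis provides. Apart from that bookkeeping, the argument is a routine dissipativity calculation that generalizes Theorem \ref{th:rangeDT} to accommodate the continuous and discrete input/output channels via the additional inequalities on $C_c, F_c, C_d, F_d$.
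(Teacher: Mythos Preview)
Your proposal is correct and follows essentially the same dissipativity argument as the paper, using the storage function $V=\zeta(\tau)^T x$, deriving flow and jump inequalities from the four conditions in \eqref{eq:rangeDTL1}, and telescoping over $k$. The only cosmetic difference is that the paper first invokes Theorem~\ref{th:rangeDT} for asymptotic stability and then uses $V_k(0,x(t_k^+))\to 0$ to pass to the limit, whereas you drop the nonnegative term $\zeta(0)^T x(t_N^+)$ directly and recover stability afterward from $\sum_k \mathds{1}^T x(t_k)<\infty$; both routes are equivalent.
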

\begin{proof}
The asymptotic stability follows directly from \blue{the first and third inequalities in \eqref{eq:rangeDTL1} and Theorem \ref{th:rangeDT} with the discrete-time Lyapunov function $\zeta(0)^Tx$}. So, let us focus on the gain characterization. To this aim, let us consider the storage function
\begin{equation}
  V(t,x(t))=V_k(t-t_k,x(t)):=\zeta(t-t_k)^Tx(t), t\in(t_k,t_{k+1}], k\in\mathbb{Z}_{\ge0}.
\end{equation}
Using the timer-variable $\tau:=t-t_k$, this can be rewritten as
\begin{equation}
  V(t,x(t))=V_k(\tau,x(t_k+\tau)):=\zeta(\tau)^Tx(t_k+\tau), \tau\in(0,T_k],k\in\mathbb{Z}_{\ge0}.
\end{equation}
\blue{Multiplying to the right the two first inequalities in \eqref{eq:rangeDTL1} by $x(t_k+\tau)$ and $w_c(t_k+\tau)$, respectively, and summing them yield
\begin{equation}
  \left[\dot\zeta(\tau)^T+\zeta(\tau)^TA(\tau)+\mathds{1}^TC_c\right]x(t_k+\tau)+\left[
            \zeta(\tau)^TE_c(\tau)+\mathds{1}^TF_c-\gamma\mathds{1}^T\right]w_c(t_k+\tau)\le0.
\end{equation}
This inequality can be rewritten as
\begin{equation}
  \dfrac{\d}{\d\tau}V_k(\tau,x(t_k+\tau))+\mathds{1}^Tz_c(t_k+\tau)-\gamma\mathds{1}^Tw_c(t_k+\tau)\le0.
\end{equation}
Integrating this expression from $0$ to $T_k$ yields
  \begin{equation}\label{eq:proof1}
    V_k(T_k,x(t_{k+1}))-V_k(0,x(t_k^+))+S_k^c\le0
  \end{equation}
  where
  \begin{equation}
    S_k^c:=\int_{t_k}^{t_{k+1}}\left[\mathds{1}^Tz_c(s)-\gamma\mathds{1}^Tw_c(s)\right]\d s.
  \end{equation}
  Similarly, multiplying to the right the two last inequalities in \eqref{eq:rangeDTL1} by $x(t_{k+1})$ and $w_d(k+1)$, respectively, letting $\theta=T_k$ and summing them yields
  \begin{equation}
      [\zeta(0)^T J-\zeta(T_k)^T+\mathds{1}^TC_d+\eps\mathds{1}^T]x(t_{k+1})+[\zeta(0)^T E_d+\mathds{1}^TF_d-\gamma\mathds{1}^T]w_d(k+1)\le0.
  \end{equation}
  This can be reformulated as
  \begin{equation}
    \zeta(0)^Tx(t_{k+1}^+)-\zeta(T_k)^Tx(t_{k+1})+\mathds{1}^Tz_c(t_{k+1})-\gamma\mathds{1}^Tw_d(k+1)\le-\eps\mathds{1}^Tx(t_{k+1})
  \end{equation}
  or, equivalently,
  \begin{equation}\label{eq:proof2}
   V _{k+1}(0,x(t_{k+1}^+))-V_k(T_k,x(t_{k+1}))+ S_{k+1}^d\le-\eps\mathds{1}^Tx(t_{k+1})
    \end{equation}
        where
      \begin{equation}
    S_k^d:=\mathds{1}^Tz_d(k)-\gamma\mathds{1}^Tw_d(k).
  \end{equation}
  Adding the inequalities \eqref{eq:proof1}-\eqref{eq:proof2} together yields
  \begin{equation}
        V_{k+1}(0,x(t_{k+1}^+))-V_k(0,x(t_k^+))+S_k^c+S_{k+1}^d\le-\eps\mathds{1}^Tx(t_{k+1}).
  \end{equation}
Since the system is asymptotically stable and $w_c\in L_1$, $w_d\in\ell_1$, then $V_{k+1}(0,x(t_{k+1}^+))\to0$ as $k\to\infty$. Hence, summing over $k$ the above inequality yields
\begin{equation}
  -V_0(0,x(0))+\sum_{k=0}^\infty\left[S_k^c+S_{k+1}^d\right]<-\eps\sum_{k=0}^\infty \mathds{1}^Tx(t_{k+1}).
\end{equation}
Note that the term $\sum_{k=0}^\infty \mathds{1}^Tx(t_{k+1})$ is finite since the discrete-time part of the system is $\ell_1$-stable (from the asymptotic stability and the linearity of the system). This can be rewritten as
\begin{equation}
\begin{array}{rcl}
   ||z_c||_{L_1}+||z_d||_{\ell_1}&\le &\gamma(||w_c||_{L_1}+||w_d||_{\ell_1})+\zeta(0)^Tx(0)-\eps\displaystyle \sum_{k=0}^\infty \mathds{1}^Tx(t_{k+1})\\
   &<&\gamma(||w_c||_{L_1}+||w_d||_{\ell_1})+\zeta(0)^Tx(0).
\end{array}
\end{equation}
which proves the result.}
\end{proof}


\subsection{Stability under minimum dwell-time}

The following result provides a sufficient condition for the stability of the system \eqref{eq:mainsyst2} under a minimum dwell-time constraint; i.e. $T_k\ge\bar T$, $k\ge0$, for some given $\bar T>0$. It is an extension of the minimum dwell-time result derived in \cite{Briat:16c} for which we also provide a more general proof:
\blue{\begin{theorem}\label{th:minDT}
Let us consider the system  \eqref{eq:mainsyst2} with $w_c\equiv0$, $w_d\equiv0$, $A(\tau)=A(\bar{T})$ for all $\tau\ge\bar{T}>0$, where $\bar T>0$ is given, and assume that it is internally positive. Then, the following statements are equivalent:
\begin{enumerate}[(a)]
  \item There exists a vector $\lambda\in\mathbb{R}_{>0}^n$ such that
  \begin{equation}
      \lambda^TA(\bar{T})<0
  \end{equation}
  and
    \begin{equation}
      \lambda^T\left(\Phi(\bar{T})J-I_n\right)<0
  \end{equation}
  hold  where $\Phi(\bar{T}=\Psi(\bar{T},0)$.
  \item There exist a differentiable vector-valued function $\zeta:[0,\bar T]\mapsto\mathbb{R}^n$, $\zeta(\bar T)>0$, and a scalar $\eps>0$ such that the conditions
 \begin{equation}\label{eq:minDT_b}
          \begin{array}{rcl}
           \zeta(\bar T)^TA(\bar{T})+\eps\mathds{1}^T&\le&0\\
           \dot\zeta(\tau)^T+\zeta(\tau)^TA(\tau)&\le&0\\
           \zeta(0)^T J-\zeta(\bar T)^T+\eps\mathds{1}^T&\le&0
          \end{array}
        \end{equation}
hold for all $\tau\in[0,\bar T]$.
\end{enumerate}
Moreover, when one of the above statements holds, then the positive impulsive system \eqref{eq:mainsyst2} is asymptotically stable under minimum dwell-time $\bar T$.\hfill\mendth
\end{theorem}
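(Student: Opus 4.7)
The plan is to follow the same pattern as the proof of Theorem \ref{th:rangeDT}: establish the equivalence of (a) and (b) by explicit construction in each direction, and then deduce asymptotic stability from (b) using a clock-dependent storage function. The essential difference from the range dwell-time case is that the discrete-time Lyapunov function at jump instants is now anchored at $\tau=\bar T$ rather than at $\tau=0$, reflecting the fact that the continuous flow must act for at least $\bar T$ time units between jumps.

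For (b) $\Rightarrow$ (a), I would pick $\lambda:=\zeta(\bar T)>0$. The first inequality of \eqref{eq:minDT_b} immediately yields $\lambda^T A(\bar T)\le -\eps\mathds{1}^T<0$. For the second condition of (a), I would propagate the differential inequality $\dot\zeta(\tau)^T+\zeta(\tau)^T A(\tau)\le 0$ through the state-transition matrix: setting $u(\tau):=\Psi(\tau,0)^T\zeta(\tau)$ and using $\dfrac{d}{d\tau}\Psi(\tau,0)^T=\Psi(\tau,0)^T A(\tau)^T$, one obtains $\dot u(\tau)=\Psi(\tau,0)^T[\dot\zeta(\tau)+A(\tau)^T\zeta(\tau)]\le 0$ componentwise, since $\Psi(\tau,0)\ge 0$ whenever $A(\tau)$ is Metzler. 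Hence $u(\bar T)\le u(0)$, i.e.\ $\Phi(\bar T)^T\zeta(\bar T)\le\zeta(0)$; combining with the third inequality of \eqref{eq:minDT_b} and using $J\ge 0$ then gives $\lambda^T(\Phi(\bar T)J-I_n)\le -\eps\mathds{1}^T<0$.

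For the converse (a) $\Rightarrow$ (b), the natural candidate is $\zeta(\tau):=\Psi(\bar T,\tau)^T\lambda$ on $[0,\bar T]$. The property $\dfrac{d}{ds}\Psi(\tau,s)=-\Psi(\tau,s)A(s)$ yields $\dot\zeta(\tau)=-A(\tau)^T\zeta(\tau)$, so the middle inequality of \eqref{eq:minDT_b} holds with equality. Since $\zeta(\bar T)=\lambda>0$ and $\zeta(0)^T=\lambda^T\Phi(\bar T)$, the remaining two inequalities reduce exactly to $\lambda^T A(\bar T)+\eps\mathds{1}^T\le 0$ and $\lambda^T(\Phi(\bar T)J-I_n)+\eps\mathds{1}^T\le 0$, both of which hold for any sufficiently small $\eps>0$ by (a).

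Finally, for the asymptotic stability claim, I would extend $\zeta$ to $[0,\infty)$ by setting $\zeta(\tau):=\zeta(\bar T)$ for $\tau\ge\bar T$; since $A(\tau)=A(\bar T)$ in that range, the first inequality of \eqref{eq:minDT_b} ensures the differential inequality continues to hold on all of $\mathbb{R}_{\ge 0}$. The storage function $V_k(\tau,x):=\zeta(\tau)^T x$ is then nonincreasing along the continuous flow, and the third inequality gives at each impulse $V_{k+1}(0,x(t_{k+1}^+))-V_k(T_k,x(t_{k+1}))\le -\eps\mathds{1}^T x(t_{k+1})$, since $T_k\ge\bar T$ forces $\zeta(T_k)=\zeta(\bar T)$. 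Summing over $k$ and using positivity of the trajectory then yields the stability conclusion. The main subtlety is keeping track of the sign conventions when differentiating the state-transition matrix and its transpose, and exploiting the componentwise bound $\Psi(\tau,0)\ge 0$, which is precisely what the Metzler structure of $A(\tau)$ provides.
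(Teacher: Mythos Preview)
Your proposal is correct and follows essentially the same approach as the paper: both directions of the equivalence use the explicit construction $\zeta(\tau)=\Psi(\bar T,\tau)^T\lambda$ for (a)$\Rightarrow$(b) and integrate the differential inequality against the nonnegative state-transition matrix to get $\zeta(\bar T)^T\Phi(\bar T)\le\zeta(0)^T$ for (b)$\Rightarrow$(a), then right-multiply by $J\ge0$. The only differences are cosmetic---you track $u(\tau)=\Psi(\tau,0)^T\zeta(\tau)$ while the paper tracks the equivalent row vector $\zeta(\tau)^T\Psi(\tau,0)$---and you sketch the stability argument explicitly via the clock-extended storage function, whereas the paper simply cites \cite{Briat:16c} for that step.
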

\begin{proof}
  The proof that the feasibility of the conditions in statement (a) imply that the system \eqref{eq:mainsyst2} is stable under minimum dwell-time $\bar T$ can be found in \cite{Briat:16c}. We now prove that the feasibility of the conditions of statement (b) imply that of the conditions in statement (a). The second inequality in statement (b) implies, as previously, that $\zeta(\bar T)^T-\zeta(0)^T\Psi(0,\bar{T})\le0$. Multiplying then this expression from the right by $\Psi(\bar{T},0)J\ge0$ yields $\zeta(\bar T)^T\Psi(\bar T,0)J-\zeta(0)^TJ\le0$. Combining this inequality with the third one of statement (b), we finally obtain that $\zeta(\bar T)^T\Phi(\bar T) J-\zeta(\bar T)^T+\eps\mathds{1}^T\le0$, which proves the implication.



  To prove the converse, let us define the vector-valued function $\zeta^*(\tau)^T:=\lambda^T\Psi(\bar T,\tau)$ where $\lambda>0$ is such that the condition in the statement (a) holds. Note that $\Phi(\bar T)=\Psi(\bar T,0)$. We show now that if the conditions of statement (a) hold, then the conditions of statement (b) hold with $\zeta(\tau)=\zeta^*(\tau)$. First note that $\zeta^*(\bar T)=\lambda>0$, $\zeta^*(0)=\Phi(\bar T)^T\lambda$ and that $\dot{\zeta}^*(\tau)^T=-\lambda^T\Psi(\bar{T},\tau)A(\tau)=-\zeta^*(\tau)^TA(\tau)$. Hence, the first condition in \eqref{eq:minDT_b} holds with  $\zeta(\tau)=\zeta^*(\tau)$, with the inequality being an equality. Evaluating now the second inequality with  $\zeta(\tau)=\zeta^*(\tau)$ yields $\lambda^T\Phi(\bar T)J-\lambda^T+\eps\mathds{1}^T\le0$. This proves the result.
\end{proof}}

As in the range dwell-time case, we extend this result to account for the input/output behavior of the system \eqref{eq:mainsyst2} in a hybrid $L_1/\ell_1$ sense.
\begin{theorem}\label{th:minDTL1}
Let us consider the system  \eqref{eq:mainsyst2} with $A(\tau)=A(\bar{T})$ and $E_c(\tau)=E_c(\bar{T})$ for all $\tau\ge\bar{T}>0$, where $\bar T>0$ is given, and assume that it is internally positive. Assume further that there exist a differentiable vector-valued function $\zeta:[0,\bar T]\mapsto\mathbb{R}^n$, $\zeta(\bar T)>0$, and scalars $\eps,\gamma>0$ such that the conditions
 \begin{equation}
          \begin{array}{rcl}
           \zeta(\bar T)^TA(\bar{T})+\zeta(\bar T)^TE_c(\bar{T})+\eps\mathds{1}^T&\le&0\\
            \zeta(\bar T)^TE_c(\bar T)+\mathds{1}^TF_c-\gamma\mathds{1}^T&\le&0\\
              \dot\zeta(\tau)^T+\zeta(\tau)^TA(\tau)+\mathds{1}^TC_c&\le&0\\
            \zeta(\tau)^TE_c(\tau)+\mathds{1}^TF_c-\gamma\mathds{1}^T&\le&0\\
           \zeta(0)^T J-\zeta(\bar T)^T+\mathds{1}^TC_d+\eps\mathds{1}^T&\le&0\\
           \zeta(0)^T E_d+\mathds{1}^TF_d-\gamma\mathds{1}^T&\le&0
          \end{array}
        \end{equation}
hold for all $\tau\in[0,\bar T]$. Then, the positive impulsive system \eqref{eq:mainsyst2} is asymptotically stable under minimum dwell-time $\bar T$ and its hybrid $L_1/\ell_1$-gain is at most $\gamma$.\hfill\mendth
\end{theorem}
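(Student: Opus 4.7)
The plan is to parallel the proof of Theorem \ref{th:rangeDTL1}, the essential novelty being that under a minimum dwell-time constraint the inter-impulse length $T_k$ can exceed $\bar T$ arbitrarily, so the storage function must be extended beyond $[0,\bar T]$. Since $A(\tau)=A(\bar T)$ and $E_c(\tau)=E_c(\bar T)$ on $[\bar T,\infty)$, the natural choice is to freeze $\zeta$ at $\zeta(\bar T)$. Concretely, I would introduce the two-piece storage function
\begin{equation*}
V(t,x(t)):=V_k(t-t_k,x(t)),\quad t\in(t_k,t_{k+1}],\ k\in\mathbb{Z}_{\ge0},
\end{equation*}
with $V_k(\tau,x):=\zeta(\tau)^Tx$ for $\tau\in[0,\bar T]$ and $V_k(\tau,x):=\zeta(\bar T)^Tx$ for $\tau\ge\bar T$, which is continuous at $\tau=\bar T$.

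On the transient phase $\tau\in(0,\bar T]$, I would multiply the third and fourth inequalities of the theorem on the right by $x(t_k+\tau)\ge 0$ and $w_c(t_k+\tau)\ge 0$, respectively, and sum. Exactly as in the proof of Theorem \ref{th:rangeDTL1}, this yields the continuous-time dissipation inequality
\begin{equation*}
\tfrac{\d}{\d\tau}V_k(\tau,x(t_k+\tau))+\mathds{1}^Tz_c(t_k+\tau)-\gamma\mathds{1}^Tw_c(t_k+\tau)\le 0.
\end{equation*}
On the steady phase $\tau>\bar T$, $\zeta$ is frozen and $A,E_c$ coincide with $A(\bar T),E_c(\bar T)$, so the first and second inequalities of the theorem play the role previously played by the third and fourth, and deliver the analogous dissipation inequality (with an additional nonpositive $-\eps\mathds{1}^Tx(t_k+\tau)$ margin coming from the first inequality). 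Integrating on $[0,T_k]$ and splitting the integral at $\bar T$ then yields the analogue of \eqref{eq:proof1}, with an extra nonpositive integrated-margin term on the right-hand side.

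Third, the jump analysis is a verbatim copy of that in Theorem \ref{th:rangeDTL1}: since $T_k\ge\bar T$ one has $V_k(T_k,x)=\zeta(\bar T)^Tx$, and multiplying the fifth and sixth inequalities on the right by $x(t_{k+1})\ge 0$ and $w_d(k+1)\ge 0$ produces
\begin{equation*}
V_{k+1}(0,x(t_{k+1}^+))-V_k(T_k,x(t_{k+1}))+S^d_{k+1}\le -\eps\mathds{1}^Tx(t_{k+1}),
\end{equation*}
with $S^c_k$ and $S^d_k$ defined exactly as before. Adding the two dissipation inequalities and telescoping in $k$, using that asymptotic stability (which follows by applying Theorem \ref{th:minDT} to the first, third and fifth inequalities) combined with $w_c\in L_1$ and $w_d\in\ell_1$ implies $V_{k+1}(0,x(t_{k+1}^+))\to 0$ and $\sum_k\mathds{1}^Tx(t_{k+1})<\infty$, I obtain
\begin{equation*}
||z_c||_{L_1}+||z_d||_{\ell_1}<\gamma(||w_c||_{L_1}+||w_d||_{\ell_1})+\zeta(0)^Tx_0,
\end{equation*}
which is the claimed hybrid $L_1/\ell_1$-gain bound.

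The main subtlety, and what I expect to be the only real obstacle, is the two-piece construction of $V_k$: one must verify that on the steady phase the frozen-$\zeta$ inequalities genuinely imply the same continuous-time dissipation inequality as the transient phase, so that the integration over $[0,T_k]$ goes through uniformly regardless of whether $T_k=\bar T$ or $T_k\gg\bar T$. Once this is in place, the rest of the argument is a direct transcription of the range dwell-time proof, with the extra $\eps$ margin guaranteeing the summability needed to telescope.
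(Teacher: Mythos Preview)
Your proposal is correct and follows precisely the route the paper intends: the paper's own proof is the single sentence ``follows from the same arguments as the proof of Theorem \ref{th:rangeDTL1} and is thus omitted,'' and you have correctly fleshed out what that entails, in particular the one genuinely new ingredient --- freezing $\zeta$ at $\zeta(\bar T)$ on $[\bar T,T_k]$ so that the first two inequalities take over the role of the third and fourth on the steady phase. Your identification of the stability part (via Theorem \ref{th:minDT} applied to the first, third and fifth inequalities, using $C_c,C_d\ge 0$) and the telescoping argument are exactly right.
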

\begin{proof}
The proof of this result follows from the same arguments as for the proof of Theorem \ref{th:rangeDTL1} and is thus omitted.
\end{proof}

\blue{\begin{remark}
  In the above results, we may question the relevance or the limitations of the assumption that  $A(\tau)=A(\bar{T})$ and $E_c(\tau)=E_c(\bar{T})$ for all $\tau\ge\bar{T}>0$. In fact, this assumption is not restrictive at all for the problems considered in this paper. As shown in many papers on control/observation for switched and impulsive systems under minimum dwell-time, the closed-loop system or the observation error verifies this assumption through a natural choice for the gains of the controller/observer; see e.g. \cite{Allerhand:11,Briat:13d,Briat:14f,Briat:15f,Briat:15i} and the references therein. This natural choice arises from the aim of obtaining convex stabization/observation conditions that can be checked in an efficient way.
\end{remark}}

\section{$L_1/\ell_1$-to-$L_1/\ell_1$ interval observation of linear impulsive systems}\label{sec:main}

\blue{Let us now consider the main problem of the paper, that is, the derivation of interval observers for linear impulsive systems. The system \eqref{eq:mainsyst2} was defined in order to formulate relevant stability conditions for linear positive impulsive systems. However, we shall derive interval observers for the following class of linear impulsive system\footnote{\blue{Note that it is straightforward to extend the scope of the paper by making all the matrices of the system \eqref{eq:mainsyst} timer-dependent. All the subsequent results will still hold through very minor changes. This is not done in this paper for the sake of simplicity.}}:}
\begin{equation}\label{eq:mainsyst}
\begin{array}{rcl}
  \dot{x}(t)&=&Ax(t)+E_cw_c(t),t\notin\{t_k\}_{k\in\mathbb{Z}_{\ge1}}\\
  x(t_k^+)&=&Jx(t_k)+E_dw_d(k),k\in\mathbb{Z}_{\ge1}\\
  y_c(t)&=&C_{c}x(t)+F_cw_c(t),t\in\mathbb{R}_{\ge0}\\
  y_d(k)&=&C_{d}x(t_k)+F_dw_d(k),k\in\mathbb{Z}_{\ge0}\\
  x(t_0)&=&x(t_0^+)=x_0
\end{array}
\end{equation}
where $x,x_0\in\mathbb{R}^n$, $w_c\in\mathbb{R}^{p_c}$, $w_d\in\mathbb{R}^{p_d}$, $y_c\in\mathbb{R}^{q_c}$ and $y_d\in\mathbb{R}^{q_d}$ are the state of the system, the initial condition, the continuous-time exogenous input, the discrete-time exogenous input, the continuous-time measured output and the discrete-time measured output, respectively. The sequence of impulse instants $\{t_k\}_{k\in\mathbb{Z}_{\ge0}}$ (no jump again at $t_0$) is assumed to satisfy the same properties as for the system \eqref{eq:mainsyst2}. The input signals are all assumed to be bounded functions and that some bounds are known; i.e. we have $w_c^-(t)\le w_c(t)\le w_c^+(t)$ and $w_d^-(k)\le w_d(k)\le w_d^+(k)$ for all $t\ge0$ and $k\ge0$ and for some known $w_c^-(t), w_c^+(t),w_d^-(k),w_d^+(k)$.

\subsection{Proposed interval observer}

We are interested in finding an interval-observer of the following form \blue{for the system \eqref{eq:mainsyst}:}
\begin{equation}\label{eq:obs}
\begin{array}{lcl}
      \dot{x}^\bullet(t)&=&Ax^\bullet(t)+E_c w_c^\bullet(t)+L_c(t)(y_c(t)-C_c x^\bullet(t)-F_c w_c^\bullet(t))\\
      x^\bullet(t_k^+)&=&Jx^\bullet(t_k)+E_d w_d^\bullet(t)+L_d(y_d(k)-C_d x^\bullet(t_k)-F_d w_d^\bullet(t))\\
      x^\bullet(0)&=&x_0^\bullet
\end{array}
\end{equation}
where $\bullet\in\{-,+\}$. Above, the observer with the superscript ``$+$'' is meant to estimate an upper-bound on the state value whereas the observer with the superscript ``-'' is meant to estimate a lower-bound, i.e. $x^-(t)\le x(t)\le x^+(t)$ for all $t\ge0$ provided that $x_0^-\le x_0\le x_0^+$, $w_c^-(t)\le w_c(t)\le w_c^+(t)$ and $w_d^-(k)\le w_d(k)\le w_d^+(k)$. The errors dynamics $e^+(t):=x^+(t)-x(t)$ and $e^-(t):=x(t)-x^-(t)$ are then described by
\begin{equation}\label{eq:error}
\begin{array}{rcl}
    \dot{e}^\bullet(t)&=&(A-L_c(t)C_c)e^\bullet(t)+(E_c-L_c(t) F_c)\delta_c^\bullet(t)\\ 
    e^\bullet(t_k^+)&=&(J-L_d C_d)e^\bullet(t_k)+(E_d-L_d F_d)\delta_d^\bullet(k)\\
    e_c^\bullet(t)&=&M_ce^\bullet(t)\\
    e_d^\bullet(k)&=&M_de^\bullet(t_k)
\end{array}
\end{equation}
where $\bullet\in\{-,+\}$, $\delta_c^+(t):=w_c^+(t)-w_c(t)\in\mathbb{R}_{\ge0}^{p_c}$, $\delta_c^-(t):=w_c(t)-w_c^-(t)\in\mathbb{R}_{\ge0}^{p_c}$, $\delta_d^+(k):=w_d^+(k)-w_d(k)\in\mathbb{R}_{\ge0}^{p_d}$ and $\delta_d^-(k):=w_d(k)-w_d^-(k)\in\mathbb{R}_{\ge0}^{p_d}$. The continuous-time and discrete-time performance outputs are denoted by  $e_c^\bullet(t)$ and $e_d^\bullet(k)$, respectively. Note that both errors have exactly the same dynamics and, consequently, it is unnecessary here to consider different observer gains. Note that this would not be the case if the observers were coupled in a non-symmetric way. The matrices $M_c,M_d\in\mathbb{R}^{n\times n}_{\ge0}$ are nonzero weighting matrices that are needed to be chosen a priori.

\subsection{Range dwell-time result}

In the range-dwell -time case, the time-varying gain $L_c(t)$ in \eqref{eq:obs} is defined as follows
\begin{equation}\label{eq:L1}
  L_c(t)=\tilde{L}_c(t-t_k),\ t\in(t_k,t_{k+1}]
\end{equation}
where $\tilde{L}_c:[0,\Tmax  ]\mapsto\mathbb{R}^{n\times q_c}$ is a matrix-valued function to be determined. The rationale for considering such structure is to allow for the derivation of convex design conditions. \blue{When such gains are considered, the dynamics of the error \eqref{eq:error}-\eqref{eq:L1} has the same structure as the system \eqref{eq:mainsyst2} with $\tau:=t-t_k$ and, therefore, the results in Section \ref{sec:preliminary} can be used.}

The interval observation problem is defined, in this case, as follows:
\begin{problem}\label{problem1}
Find an interval observer of the form \eqref{eq:obs} (i.e. a matrix-valued function $L_c(\cdot)$ of the form \eqref{eq:L1} and a matrix $L_d\in\mathbb{R}^{n\times q_d}$) such that the error dynamics \eqref{eq:error} is
  \begin{enumerate}[(a)]
    \item state-positive, that is
    \begin{itemize}
      \item $A-\tilde L_c(\tau)  C_c$ is Metzler for all $\tau\in[0,\Tmax  ]$,
      \item $E_c-\tilde L_c(\tau)  F_c $ is nonnegative for all $\tau\in[0,\Tmax  ]$,
      \item $J-L_dC_d$ and $E_d-L_dF_d$ are nonnegative,
    \end{itemize}
    \item asymptotically stable under range dwell-time $[\Tmin ,\Tmax  ]$ when $w_c\equiv0$ and $w_d\equiv0$, and
    \item satisfies the condition
    \begin{equation}
    ||e_c^\bullet||_{L_1}+||e_d^\bullet||_{\ell_1}<\gamma(||\delta^\bullet_c||_{L_1}+||\delta^\bullet_d||_{\ell_1}),\ \bullet\in\{-,+\}
  \end{equation}
  for some $\gamma$, for all $w_c\in L_1$ and all $w_d\in\ell_1$ when $x_0=0$.
  \end{enumerate}
\end{problem}

\blue{\begin{remark}
  It is  important to mention here that the observer structure \eqref{eq:obs} is rather simple and, hence, restrictive in nature. In this regard, there may not exist an interval-observer of this form which solves the above interval-observation problem. Fortunately, the approach proposed in this paper is general enough to be readily adaptable to more complex interval-observer structures. For instance, one may want to find an invertible matrix $P$ for which the change of variables $\tilde{e}=Pe$ yields a new dynamical system for which the positivity conditions in Problem \ref{problem1} are more easily satisfied. Computing such a matrix is not easy but some methods exist; see e.g. \cite{Raissi:12,Chambon:15}. Other interval-observer structures \cite{Raissi:18} splitting certain matrices into a positive and a negative part can also be easily considered using the proposed approach as the obtained stability conditions are very general.
\end{remark}}

The following result provides a sufficient condition for the solvability of Problem \ref{problem1}:
\begin{theorem}\label{th:1}
Assume that there exist a differentiable matrix-valued function $X:[0,\Tmax  ]\mapsto\mathbb{D}^n$, $X(0)\succ0$, a matrix-valued function $U_c:[0,\Tmax  ]\mapsto\mathbb{R}^{n\times q_c}$, a matrix $U_d\in\mathbb{R}^{n\times q_d}$ and scalars $\eps,\alpha,\gamma>0$ such that the conditions
\begin{subequations}\label{eq:th1a}
\begin{alignat}{4}
            X(\tau)A-U_c(\tau)C_c+\alpha I_n&\ge0\label{eq:th1:1}\\
           X(0) J-U_d C_d&\ge0\label{eq:th1:2}\\
           X(\tau)E_c-U_c(\tau)F_c&\ge0\label{eq:th1:3}\\
            X(0)E_d-U_d F_d&\ge0\label{eq:th1:4}
  \end{alignat}
\end{subequations}
           and
 \begin{subequations}\label{eq:th1b}
\begin{alignat}{4}
           \mathds{1}^T\left[\dot{X}(\tau)+X(\tau)A-U_c(\tau)C_c\right]+\mathds{1}^TM_c&\le0\label{eq:th1:5}\\
            \mathds{1}^T\left[X(0) J-U_d C_d-X(\theta)+\eps I\right]+\mathds{1}^TM_d&\le0\label{eq:th1:6}\\
            \mathds{1}^T\left[X(\tau)E_c-U_c(\tau)F_c\right]-\gamma \mathds{1}^T&\le0\label{eq:th1:7}\\
            \mathds{1}^T\left[X(0)E_d-U_d F_d\right]-\gamma \mathds{1}^T&\le0\label{eq:th1:8}
             \end{alignat}
\end{subequations}
%
%
        %
hold for all $\tau\in[0,\Tmax  ]$ and all $\theta\in[\Tmin ,\Tmax  ]$. Then, there exists an interval observer of the form \eqref{eq:obs}-\eqref{eq:L1} that solves Problem \ref{problem1} and suitable observer gains are given by
\begin{equation}\label{eq:formula1}
  \tilde{L}_c(\tau)= X(\tau)^{-1}U_c(\tau)\quad \textnormal{and}\quad L_d=X(0)^{-1}U_d.
\end{equation}
\end{theorem}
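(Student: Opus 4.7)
The plan is to reduce the interval-observer \emph{design} problem to a direct application of the analysis result Theorem \ref{th:rangeDTL1} on the error dynamics \eqref{eq:error}, via the linearizing change of variables $U_c(\tau):=X(\tau)\tilde L_c(\tau)$ and $U_d:=X(0)L_d$. Since $X(\tau)\in\mathbb{D}^n$ is diagonal and $X(0)\succ0$ (extended to $X(\tau)\succ0$ on $[0,\Tmax]$, which is the implicit requirement of the formulas in \eqref{eq:formula1}), the inverse $X(\tau)^{-1}$ is itself a positive diagonal matrix.

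First, I would verify item (a) of Problem \ref{problem1}. Left-multiplication by a positive diagonal matrix preserves both Metzlerness and entrywise nonnegativity, since it amounts to rescaling rows by positive scalars. Consequently, \eqref{eq:th1:1} implies that $X(\tau)A-U_c(\tau)C_c$ is Metzler (the slack $\alpha I_n$ affects the diagonal only), hence $A-\tilde L_c(\tau)C_c = X(\tau)^{-1}\!\left[X(\tau)A-U_c(\tau)C_c\right]$ is Metzler. Similarly, \eqref{eq:th1:2}-\eqref{eq:th1:4} yield nonnegativity of $J-L_dC_d$, $E_c-\tilde L_c(\tau)F_c$ and $E_d-L_dF_d$. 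By Proposition \ref{prop:positive}, the error dynamics \eqref{eq:error} are internally positive, which together with $e^\bullet(0)\ge0$ and $\delta_c^\bullet,\delta_d^\bullet\ge0$ (by construction) gives $e^\bullet(t)\ge0$ and thus the correct ordering $x^-(t)\le x(t)\le x^+(t)$.

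Next, since both $e^+$ and $e^-$ obey the same dynamics, it suffices to analyze one of them. I would apply Theorem \ref{th:rangeDTL1} to \eqref{eq:error} with the system matrices $A(\tau)\leftarrow A-\tilde L_c(\tau)C_c$, $E_c(\tau)\leftarrow E_c-\tilde L_c(\tau)F_c$, $J\leftarrow J-L_dC_d$, $E_d\leftarrow E_d-L_dF_d$, output matrices $C_c\leftarrow M_c$, $C_d\leftarrow M_d$, and vanishing feed-through matrices. The key step is to choose the analysis variable $\zeta(\tau)^T:=\mathds{1}^TX(\tau)$, which satisfies $\zeta(0)=X(0)\mathds{1}>0$. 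Because $X(\tau)$ is diagonal, one has $\zeta(\tau)^T\tilde L_c(\tau)C_c=\mathds{1}^TX(\tau)X(\tau)^{-1}U_c(\tau)C_c=\mathds{1}^TU_c(\tau)C_c$ and $\dot\zeta(\tau)^T=\mathds{1}^T\dot X(\tau)$, so that the four inequalities of Theorem \ref{th:rangeDTL1} collapse exactly onto \eqref{eq:th1:5}, \eqref{eq:th1:7}, \eqref{eq:th1:6} and \eqref{eq:th1:8}, respectively. This delivers both the asymptotic stability under $[\Tmin,\Tmax]$ (item (b)) and the hybrid $L_1/\ell_1$-gain bound $\gamma$ (item (c)).

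The only genuine obstacle is the algebraic subtlety ensuring that the substitution preserves convexity and commutes with the positivity structure: this is precisely why $X(\tau)$ is constrained to be diagonal. Indeed, for diagonal positive $X(\tau)$, $\mathds{1}^TX(\tau)$ is simply the vector of its diagonal entries and the identity $X(\tau)A-U_c(\tau)C_c = X(\tau)[A-\tilde L_c(\tau)C_c]$ is \emph{simultaneously} Metzler-preserving and linear in the decision variables $(X,U_c)$; a non-diagonal $X$ would break this equivalence and render the conditions non-convex. Beyond this point the proof is a routine re-indexing and follows the same storage-function argument as in Theorem \ref{th:rangeDTL1}, so it can be left implicit.
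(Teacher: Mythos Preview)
Your proposal is correct and follows essentially the same approach as the paper: the paper's proof also invokes the diagonal structure of $X(\cdot)$ together with the change of variables \eqref{eq:formula1} to recover the positivity conditions of Problem~\ref{problem1} from \eqref{eq:th1:1}--\eqref{eq:th1:4}, and then sets $\zeta(\tau)=X(\tau)\mathds{1}$ (equivalently your $\zeta(\tau)^T=\mathds{1}^TX(\tau)$, since $X$ is diagonal) to reduce \eqref{eq:th1:5}--\eqref{eq:th1:8} to the conditions of Theorem~\ref{th:rangeDTL1} applied to the error dynamics. Your write-up is in fact more explicit than the paper's, and your remark that invertibility of $X(\tau)$ for all $\tau\in[0,\Tmax]$ is implicitly needed for \eqref{eq:formula1} is a fair observation.
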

\begin{proof}
From the diagonal structure of the matrix-valued function $X(\cdot)$, the changes of variables \eqref{eq:formula1} and Proposition \ref{prop:positive}, we can observe that the inequalities \eqref{eq:th1:1} to \eqref{eq:th1:4} are readily equivalent to saying that the statement (a) of Problem \ref{problem1} holds. Using now the changes of variables $\lambda(\tau)=X(\tau)\mathds{1}$ and \eqref{eq:formula1}, we get that the feasibility of \eqref{eq:th1:5}-\eqref{eq:th1:6} is equivalent to saying that the error dynamics \eqref{eq:error} with \eqref{eq:L1} verifies the range dwell-time conditions of Theorem \ref{th:rangeDTL1} with the same $\lambda(\tau)$. The proof is completed.
\end{proof}

\subsection{Minimum dwell-time result}

In the minimum dwell-time case, the time-varying gain  $L_c$ is defined as follows
\begin{equation}\label{eq:L2}
  L_c(t)=\left\{\begin{array}{ll}
  \tilde{L}_c(t-t_k)& \textnormal{if }t\in(t_k,t_{k}+\tau]\\
  \tilde{L}_c(\bar T)& \textnormal{if }t\in(t_k+\bar{T},t_{k+1}]
  \end{array}\right.
\end{equation}
where $\tilde{L}_c:\mathbb{R}_{\ge0}\mapsto\mathbb{R}^{n\times q_c}$ is a function to be determined. \blue{When such gains are considered, the dynamics of the error \eqref{eq:error}-\eqref{eq:L1} has the same structure as the system \eqref{eq:mainsyst2} with $\tau:=t-t_k$ and, therefore, the results in Section \ref{sec:preliminary} can be used.}

The observation problem is defined, in this case, as follows:
\begin{problem}\label{problem2}
Find an interval observer of the form \eqref{eq:obs} (i.e. a matrix-valued function $L_c(\cdot)$ of the form \eqref{eq:L2} and a matrix $L_d\in\mathbb{R}^{n\times q_d}$) such that the error dynamics \eqref{eq:error} is
  \begin{enumerate}[(a)]
    \item state-positive, that is
    \begin{itemize}
      \item $A-\tilde L_c(\tau) C_c$ is Metzler for all $\tau\in[0,\bar{T}]$,
      \item $E_c-\tilde L_c(\tau) F_c $ is nonnegative for all $\tau\in[0,\bar{T}]$,
      \item $J-L_dC_d$ and $E_d-L_dF_d$ are nonnegative,
    \end{itemize}
    \item asymptotically stable under minimum dwell-time $\bar T$ when $w_c\equiv0$ and $w_d\equiv0$, and
    \item satisfies the condition
    \begin{equation}
    ||e_c^\bullet||_{L_1}+||e_d^\bullet||_{\ell_1}<\gamma(||\delta^\bullet_c||_{L_1}+||\delta^\bullet_d||_{\ell_1}),\ \bullet\in\{-,+\}
  \end{equation}
  for some $\gamma$, for all $w_c\in L_1$ and for all $w_d\in\ell_1$ when $x_0=0$.
  \end{enumerate}
\end{problem}

The following result provides a sufficient condition for the solvability of Problem \ref{problem2}:
\begin{theorem}\label{th:2}
There exists a differentiable matrix-valued function $X:[0,\bar T]\mapsto\mathbb{D}^n$, $X(\bar T)\succ0$, a matrix-valued function $U_c:[0,\bar T]\mapsto\mathbb{R}^{n\times q_c}$, a matrix $U_d\in\mathbb{R}^{n\times q_d}$ and scalars $\eps,\alpha,\gamma>0$ such that the conditions
\begin{subequations}\label{eq:th2a}
\begin{alignat}{4}
           X(\tau)A-U_c(\tau)C_c+\alpha I_n&\ge0\\
           X(\bar T) J-U_d C_d&\ge0\\
           X(\tau)E_c-U_c(\tau)F_c&\ge0\\
           X(\bar T)E_d-U_d F_d&\ge0
  \end{alignat}
\end{subequations}
and
    \begin{subequations}\label{eq:th2b}
\begin{alignat}{4}
           \mathds{1}^T\left[X(\bar T)A-U_c(\bar T)C_c+\eps I_n\right]&\le0\\
           \mathds{1}^T\left[X(\bar T)E_c-U_c(\bar T)F_c\right]-\gamma \mathds{1}^T&\le0\\
           \mathds{1}^T\left[\dot{X}(\tau)+X(\tau)A-U_c(\tau)C_c\right]&\le0\\
           \mathds{1}^T\left[X(\tau)E_c-U_c(\tau)F_c\right]-\gamma \mathds{1}^T&\le0\\
            \mathds{1}^T\left[X(0) J-U_d C_d-X(\bar T)+\eps I\right]&\le0\\
            \mathds{1}^T\left[X(0)E_d-U_d F_d\right]-\gamma \mathds{1}^T&\le0
\end{alignat}
\end{subequations}
hold for all $\tau\in[0,\bar T]$.  Then, there exists an interval observer of the form \eqref{eq:obs}-\eqref{eq:L2} that solves Problem \ref{problem2} and suitable observer gains are given by
\begin{equation}
  \tilde{L}_c(\tau)= X(\tau)^{-1}U_c(\tau)\quad \textnormal{and}\quad L_d=X(\bar{T})^{-1}U_d.
\end{equation}
\end{theorem}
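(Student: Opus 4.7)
The plan is to carry out the minimum-dwell-time analogue of the proof of Theorem~\ref{th:1}, namely to introduce the changes of variable
\[
\tilde L_c(\tau)=X(\tau)^{-1}U_c(\tau),\qquad L_d=X(\bar T)^{-1}U_d,\qquad \zeta(\tau)^T:=\mathds{1}^TX(\tau),
\]
and to then recognize the rewritten inequalities \eqref{eq:th2a}--\eqref{eq:th2b} as the hypotheses of Theorem~\ref{th:minDTL1} applied to the error dynamics \eqref{eq:error}-\eqref{eq:L2}. This is available since the error system, with a timer-dependent gain of the form \eqref{eq:L2} that becomes time-invariant on $[\bar T,\infty)$, fits exactly into the minimum-dwell-time framework of Section~\ref{sec:preliminary}.

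First I would address item (a) of Problem~\ref{problem2}. Because $X(\tau)$ is diagonal and (as argued in the last paragraph below) positive definite throughout $[0,\bar T]$, left-multiplication by $X(\tau)$ preserves off-diagonal signs and overall nonnegativity. Consequently, $A-\tilde L_c(\tau)C_c = X(\tau)^{-1}[X(\tau)A-U_c(\tau)C_c]$ is Metzler by the first inequality of \eqref{eq:th2a} (the diagonal shift $+\alpha I_n$ is harmless for off-diagonal entries), and the remaining three inequalities of \eqref{eq:th2a} yield nonnegativity of $E_c-\tilde L_c(\tau)F_c$, $J-L_dC_d$, and $E_d-L_dF_d$. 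Together with Proposition~\ref{prop:positive}, this produces state-positivity of \eqref{eq:error}.

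Second, with $\zeta(\tau)^T=\mathds{1}^TX(\tau)$ we have $\zeta(\bar T)>0$, $\dot\zeta(\tau)^T=\mathds{1}^T\dot X(\tau)$, and $\mathds{1}^TU_c(\tau)=\zeta(\tau)^T\tilde L_c(\tau)$. Substituting these identities into the six inequalities of \eqref{eq:th2b} reproduces, line by line, the six hypotheses of Theorem~\ref{th:minDTL1} for the error system \eqref{eq:error}-\eqref{eq:L2} with continuous and discrete output matrices $M_c$ and $M_d$ and both feedthroughs equal to zero. Theorem~\ref{th:minDTL1} then delivers asymptotic stability under minimum dwell-time $\bar T$ together with the hybrid $L_1/\ell_1$-gain bound $\gamma$, which are items (b) and (c) of Problem~\ref{problem2}.

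The only step needing genuine care is the positive definiteness of $X(\tau)$ on the whole interval $[0,\bar T]$, which is required both to invert $X(\tau)$ in the gain formula and to guarantee $\zeta(\tau)>0$ so that Theorem~\ref{th:minDTL1} applies. It is not directly assumed---only $X(\bar T)\succ0$ is---and must be deduced from the Metzler-type inequality in \eqref{eq:th2a} together with the boundary condition at $\bar T$, by integrating the pertinent differential inequality backwards and exploiting the diagonal structure of $X$. This is exactly the same subtle point encountered in Theorem~\ref{th:1}, and once it is settled the remainder of the argument is purely algebraic substitution.
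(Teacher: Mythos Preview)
Your approach is precisely the one the paper has in mind: the paper's own proof of Theorem~\ref{th:2} reads in its entirety ``The proof is similar to that of Theorem~\ref{th:1} and is omitted,'' and you have correctly unpacked that similarity via the changes of variable $\tilde L_c(\tau)=X(\tau)^{-1}U_c(\tau)$, $L_d=X(\bar T)^{-1}U_d$, $\zeta(\tau)=X(\tau)\mathds{1}$, followed by the invocation of Theorem~\ref{th:minDTL1} on the error dynamics. You even go slightly beyond the paper by flagging the need for $X(\tau)\succ0$ throughout $[0,\bar T]$ to make the gain formula well-defined, a point the paper leaves implicit in both Theorem~\ref{th:1} and Theorem~\ref{th:2}.
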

\begin{proof}
The proof is similar to that of Theorem \ref{th:1} and is omitted.
\end{proof}

\subsection{Computational considerations}

Several methods can be used to check the conditions stated in Theorem \ref{th:1} and Theorem \ref{th:2}. The piecewise linear discretization approach \cite{Allerhand:11,Xiang:15a,Briat:16c} assumes that the decision variables are piecewise linear functions of their arguments and leads to a finite-dimensional linear program that can be checked using standard linear programming algorithms. Another possible approach is based on Handelman's Theorem \cite{Handelman:88} and also leads to a finite-dimensional program \cite{Briat:11h,Briat:16c}. We opt here for an approach based on Putinar's Positivstellensatz \cite{Putinar:93} and semidefinite programming \cite{Parrilo:00}\footnote{See \cite{Briat:16c} for a comparison of all these methods.}. Before stating the main result of the section, we need to define first some terminology. A multivariate polynomial $p(x)$ is said to be a sum-of-squares (SOS) polynomial if it can be written as $\textstyle p(x)=\sum_{i}q_i(x)^2$ for some polynomials $q_i(x)$. A polynomial matrix $p(x)\in\mathbb{R}^{n\times m}$ is said to \emph{componentwise sum-of-squares} (CSOS) if each of its entries is an SOS polynomial. Checking whether a polynomial is SOS can be exactly cast as a semidefinite program \cite{Parrilo:00,Chesi:10b} that can be easily solved using semidefinite programming solvers such as SeDuMi \cite{Sturm:01a}. The package SOSTOOLS \cite{sostools3} can be used to formulate and solve SOS programs in a convenient way.

Below is the SOS implementation of the conditions of Theorem \ref{th:1}:
\begin{proposition}\label{prop:SOS1}
  Let $d\in\mathbb{N}$, $\eps>0$ and $\epsilon>0$ be given and assume that there exist polynomials $\chi_i:\mathbb{R}\mapsto\mathbb{R}$, $i=1,\ldots,n$, $\Psi_1:\mathbb{R}\mapsto\mathbb{R}^{n\times n}$,  $\Psi_2:\mathbb{R}\mapsto\mathbb{R}^{n\times q_c}$ and $\psi_1,\psi_2:\mathbb{R}\mapsto\mathbb{R}^{n}$, $\psi_3:\mathbb{R}\mapsto\mathbb{R}^{p_c}$ of degree $2d$, a matrix $U_d\in\mathbb{R}^{n\times q_d}$ and scalars $\alpha,\gamma\ge0$ such that
  \begin{enumerate}[(a)]
    \item $\Psi_1(\tau),\Psi_2(\tau),\psi_1(\tau),\psi_2(\tau)$ and $\psi_3(\tau)$ are CSOS,
    \item $X(0)-\epsilon I_n\ge0$ (or is CSOS),
    \item $X(\tau)A-U_c(\tau)C_c+\alpha I_n-\Psi_1(\tau)f(\tau)$ is CSOS,
    \item $X(0) J-U_d C_d\ge0$ (or is CSOS),
    \item $X(\tau)E_c-U_c(\tau)F_c-\Psi_2(\tau)f(\tau)$ is CSOS,
    \item $X(0)E_d-U_d F_d\ge0$ (or is CSOS),
    \item $-\mathds{1}^T\left[\dot{X}(\tau)+X(\tau)A-U_c(\tau)C_c\right]-\mathds{1}^TM_c-f(\tau)\psi_1(\tau)^T$

    is CSOS,
    \item $-\mathds{1}^T\left[X(0) J-U_d C_d-X(\theta)+\eps I\right]-\mathds{1}^TM_d-g(\theta)\psi_2(\theta)^T$

     is CSOS,
     %
     %
     \item $-\mathds{1}^T\left[X(\tau)E_c-U_c(\tau)F_c\right]+\gamma \mathds{1}^T-f(\tau)\psi_3(\tau)$ is CSOS,
     \item $-\mathds{1}^T\left[X(0)E_d-U_d F_d\right]+\gamma \mathds{1}^T\ge0$  (or is CSOS)
  \end{enumerate}
  where $X(\tau):=\diag_{i=1}^n(\chi_i(\tau))$, $f(\tau):=\tau(\Tmax  -\tau)$ and $g(\theta):=(\theta-\Tmin )(\Tmax  -\theta)$.

  Then, the conditions  of Theorem \ref{th:1} hold with the same $X(\tau)$, $U_c(\tau)$, $U_d$, $\alpha$, $\eps$ and $\gamma$.
\end{proposition}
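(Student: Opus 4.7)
The plan is to verify that each of the SOS conditions (a)–(j) in Proposition \ref{prop:SOS1} implies its corresponding semi-infinite inequality in Theorem \ref{th:1}, so that the conclusions of that theorem apply verbatim with the same decision variables $X(\tau)$, $U_c(\tau)$, $U_d$, $\alpha$, $\eps$, $\gamma$. The engine is the easy (sufficiency) direction of Putinar's Positivstellensatz: if a polynomial $p(\tau)$ can be written as $p(\tau)=\sigma_0(\tau)+\sigma_1(\tau)\,h(\tau)$ with $\sigma_0,\sigma_1$ SOS and $h(\tau)\ge0$ on an interval $I$, then $p(\tau)\ge0$ on $I$. Applied componentwise to a matrix-valued polynomial, this is exactly the CSOS formulation used in the proposition, with $f(\tau)=\tau(\Tmax-\tau)\ge0$ on $[0,\Tmax]$ and $g(\theta)=(\theta-\Tmin)(\Tmax-\theta)\ge0$ on $[\Tmin,\Tmax]$.

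First I would handle the state-positivity conditions \eqref{eq:th1:1}--\eqref{eq:th1:4}. Condition (c) asserts that $X(\tau)A-U_c(\tau)C_c+\alpha I_n-\Psi_1(\tau)f(\tau)$ is CSOS (hence entrywise $\ge0$ for all $\tau$), while (a) gives $\Psi_1(\tau)\ge0$ entrywise; since $f(\tau)\ge0$ on $[0,\Tmax]$, the CSOS term $\Psi_1(\tau)f(\tau)$ is entrywise nonnegative on that interval, so \eqref{eq:th1:1} follows. Condition (e) is handled identically for \eqref{eq:th1:3}. Conditions (d) and (f) are fixed-point inequalities (no $\tau$ quantifier), so CSOS-ness or direct nonnegativity immediately yields \eqref{eq:th1:2} and \eqref{eq:th1:4}.

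Next I would handle the performance conditions \eqref{eq:th1:5}--\eqref{eq:th1:8}. Condition (g) gives that the negative of the left-hand side of \eqref{eq:th1:5}, minus $f(\tau)\psi_1(\tau)^T$, is CSOS; since $f(\tau)\psi_1(\tau)^T\ge0$ on $[0,\Tmax]$ by (a), we obtain \eqref{eq:th1:5} on $[0,\Tmax]$. Condition (i) similarly yields \eqref{eq:th1:7}. The only genuinely $\theta$-dependent step is condition (h): here the polynomial appearing is certified via $g(\theta)\psi_2(\theta)^T$, which is entrywise nonnegative exactly on $[\Tmin,\Tmax]$, giving \eqref{eq:th1:6} on the correct range. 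Condition (j) is a fixed-point inequality so it immediately yields \eqref{eq:th1:8}.

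Finally, I would check the structural hypotheses of Theorem \ref{th:1}: by construction $X(\tau)=\diag_{i=1}^n(\chi_i(\tau))\in\mathbb{D}^n$ for every $\tau$, and condition (b) gives $X(0)\succeq\epsilon I_n\succ0$, which is exactly the requirement $X(0)\succ0$. With all hypotheses of Theorem \ref{th:1} now in force, its conclusion provides the claimed interval observer with gains given by \eqref{eq:formula1}. The whole argument is essentially bookkeeping; the only mild subtlety, and the thing one must double-check carefully, is the sign convention, namely that each SOS-style certificate is attached to the \emph{correct} side of each inequality so that multiplying $\Psi_i(\tau) f(\tau)$ or $\psi_i(\tau) f(\tau)$ (resp.\ $g(\theta)$) by a nonnegative quantity produces a term of the right sign to be absorbed.
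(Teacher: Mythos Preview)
Your proposal is correct and is exactly the standard argument the paper has in mind: the paper's own proof simply defers to the analogous Proposition 3.15 in \cite{Briat:16c}, which unwinds in precisely the way you describe, namely that CSOS implies entrywise nonnegativity and that the multiplier terms $\Psi_i(\tau)f(\tau)$, $\psi_i(\tau)f(\tau)$, $\psi_2(\theta)g(\theta)$ are nonnegative on the relevant intervals, so each SOS certificate yields the corresponding inequality of Theorem \ref{th:1}. Your treatment of the structural hypothesis $X(0)\succ0$ via condition (b) and your attention to the sign conventions are also accurate.
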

\begin{proof}
The proof follows from the same arguments as the proof of Proposition 3.15 in \cite{Briat:16c}.
\end{proof}

\begin{remark}[Asymptotic exactness]
The above relaxation is asymptotically exact under very mild conditions \cite{Putinar:93} in the sense that if the original conditions of Theorem \ref{th:1} hold then we can find a degree $d$ for the polynomial variables for which the conditions in Proposition \ref{prop:SOS1} are feasible. See \cite{Briat:16c} for more details.
\end{remark}

\subsection{Examples}\label{sec:examples}

All the computations are performed on a computer equipped with a processor i7-5600U@2.60GHz with 16GB of RAM. The conditions are implemented using SOSTOOLS \cite{sostools3} and solved with SeDuMi \cite{Sturm:01a}.

\blue{\subsubsection{Example 1. Range dwell-time}

Let us consider now the system \eqref{eq:mainsyst} with matrices
\begin{equation}\label{eq:ex2}
\begin{array}{l}
    A=\begin{bmatrix}
    -2 & 1\\
    0 & 1
  \end{bmatrix}, E_c=\begin{bmatrix}
    0.1\\
    0.1
  \end{bmatrix}, J=\begin{bmatrix}
1.1 & 1\\
0 & -0.1
  \end{bmatrix},  E_d=\begin{bmatrix}
    0.3\\
    0.3
  \end{bmatrix},\\
    C_c=C_d=\begin{bmatrix}
    0 & 1
  \end{bmatrix}, F_c=F_d=0.1.
\end{array}
\end{equation}
Define also $w_c(t)=\sin(t)$, $w^-(t)=-1$, $w^+(t)=1$, $w_d(k)$ is a stationary random process that follows the uniform distribution $\mathcal{U}(-0.5,0.5)$, $w_d^-=-0.5$ and $w_d^+=0.5$. Using polynomials of degree 2 and solving for the conditions of Theorem \ref{th:rangeDTL1} with $\Tmin =0.3$ and $\Tmax  =0.5$, we get the value $\gamma=.86233$ as minimum together with the observer gains
\begin{equation}\label{eq:Lex1}
  L_d=\begin{bmatrix}
     1\\
     -0.1
  \end{bmatrix}\quad \textnormal{and}\quad \tilde L_c(\tau)=\begin{bmatrix}
    1\\
    1
  \end{bmatrix}.
\end{equation}
Note that the gain $L_c$ is constant and has been obtained from an approximation of the $\tau$-dependent gain which deviates from a very small amount from the above value. For information, the semidefinite program has 167 primal variables, 61 dual variables and it takes 5.57 seconds to solve. To illustrate this result, we generate random impulse times satisfying the range dwell-time condition and we obtain the trajectories depicted in Fig.~\ref{fig:states_rangeDT}. The disturbance inputs are depicted in Fig.~\ref{fig:inputs_rangeDT}.
\begin{figure}
  \centering
  \includegraphics[width=0.8\textwidth]{./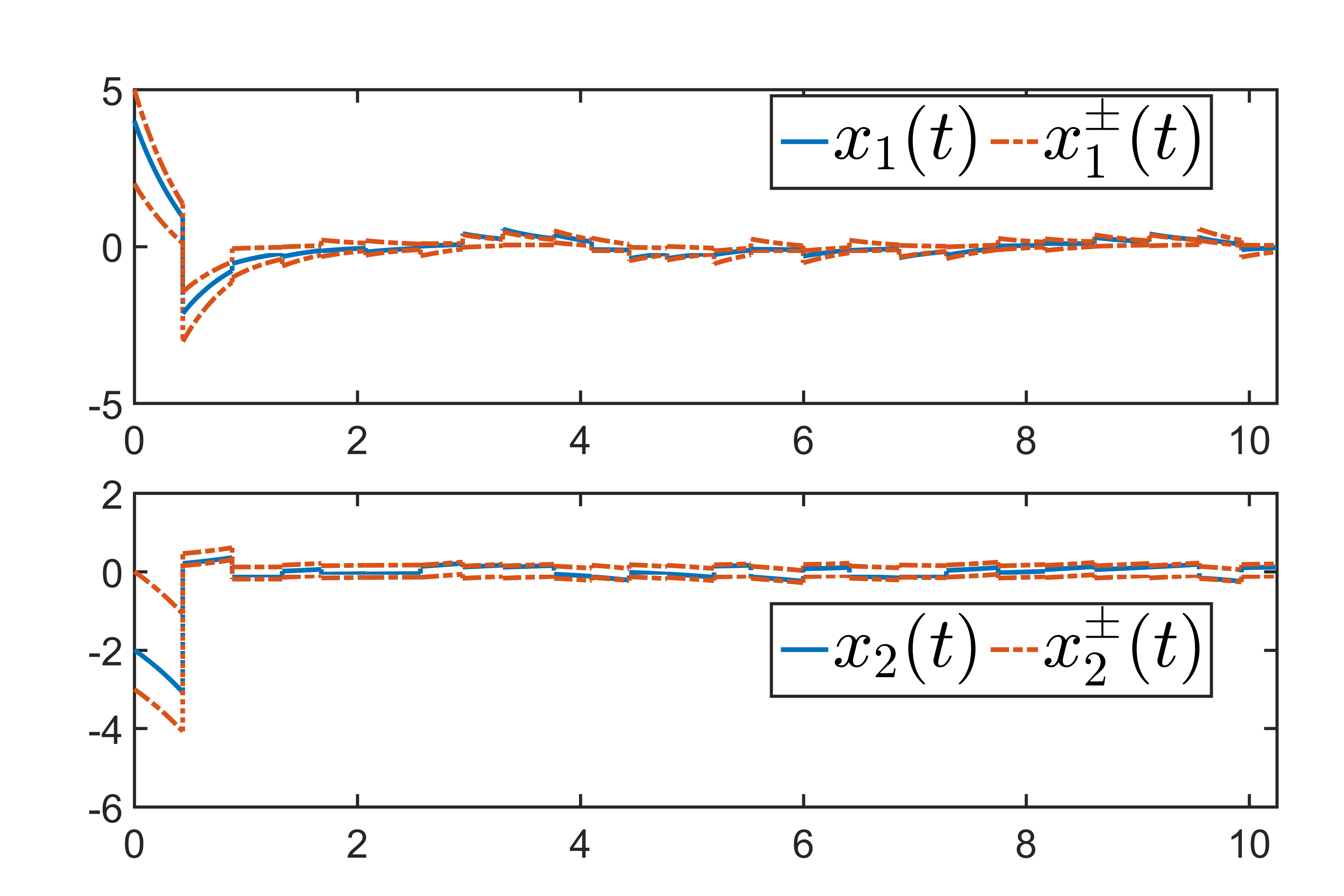}
  \caption{Trajectories of the system \eqref{eq:mainsyst}-\eqref{eq:ex2} and the interval observer \eqref{eq:obs}-\eqref{eq:L1}-\eqref{eq:Lex1} for some randomly chosen impulse times satisfying the range dwell-time $[0.3,\ 0.5]$.}\label{fig:states_rangeDT}
\end{figure}

\begin{figure}
  \centering
  \includegraphics[width=0.8\textwidth]{./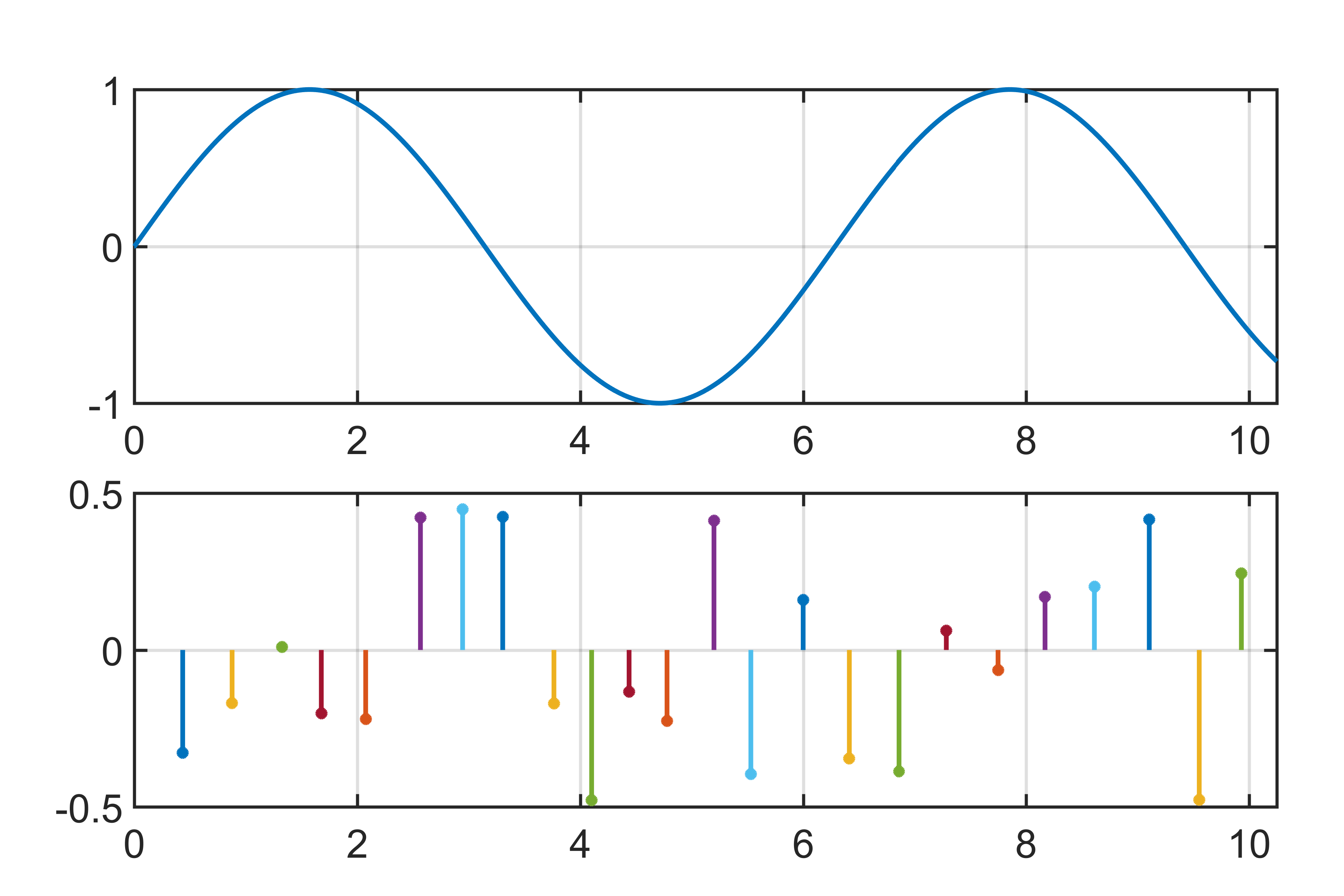}
  \caption{Trajectory of the continuous-time input $w_c$ (top) and the discrete-time input $w_d$ (bottom)}\label{fig:inputs_rangeDT}
\end{figure}

\subsubsection{Example 2. Minimum dwell-time}

Let us consider here the example from \cite{Briat:13d} to which we add disturbances as also done in \cite{Degue:16nolcos}. The matrices of the system are given by
\begin{equation}\label{eq:ex1}
\begin{array}{l}
    A=\begin{bmatrix}
    -1 & 0\\
    1 & -2
  \end{bmatrix}, E_c=\begin{bmatrix}
    0.1\\
    0.1
  \end{bmatrix}, J=\begin{bmatrix}
2 & -1\\
1 & 3
  \end{bmatrix},  E_d=\begin{bmatrix}
    0.3\\
    0.3
  \end{bmatrix},\\
    C_c=C_d=\begin{bmatrix}
    0 & 1
  \end{bmatrix}, F_c=F_d=0.03.
\end{array}
\end{equation}
The disturbances are defined in the same way as in the previous example. Choosing a desired minimum dwell-time of $\bar T=1$ and solving the conditions of Theorem \ref{th:2} with polynomials of degree 4 yield the minimum value 1.6437 for $\gamma$ together with the observer gains
\begin{equation}\label{eq:Lex2}
  L_d=\begin{bmatrix}
    -1\\
     3
  \end{bmatrix}
\end{equation}
and
\begin{equation}
  \tilde L_c(\tau)=\begin{bmatrix}
  0\\
  \dfrac { 0.66556- 0.3359\tau+ 3.0052{\tau}^{2}- 5.8123{\tau}^{3}+ 5.811{\tau}^{4}}{ 0.19967+ 0.025583\tau+ 0.15592{\tau}^{2}- 0.36554{\tau}^{3}+ 0.88447{\tau}^{4}}
  \end{bmatrix}.
\end{equation}
For information, the semidefinite program has 275 primal variables, 75 dual variables and it takes 2.530 seconds to solve. To illustrate this result, we generate random impulse times satisfying the minimum dwell-time condition and we obtain the trajectories depicted in Fig.~\ref{fig:states_minDT} where we can observe the ability of the interval observer to properly frame the trajectory of the system. The disturbance inputs are depicted in Fig.~\ref{fig:inputs_minDT}.

\begin{figure}
  \centering
  \includegraphics[width=0.8\textwidth]{./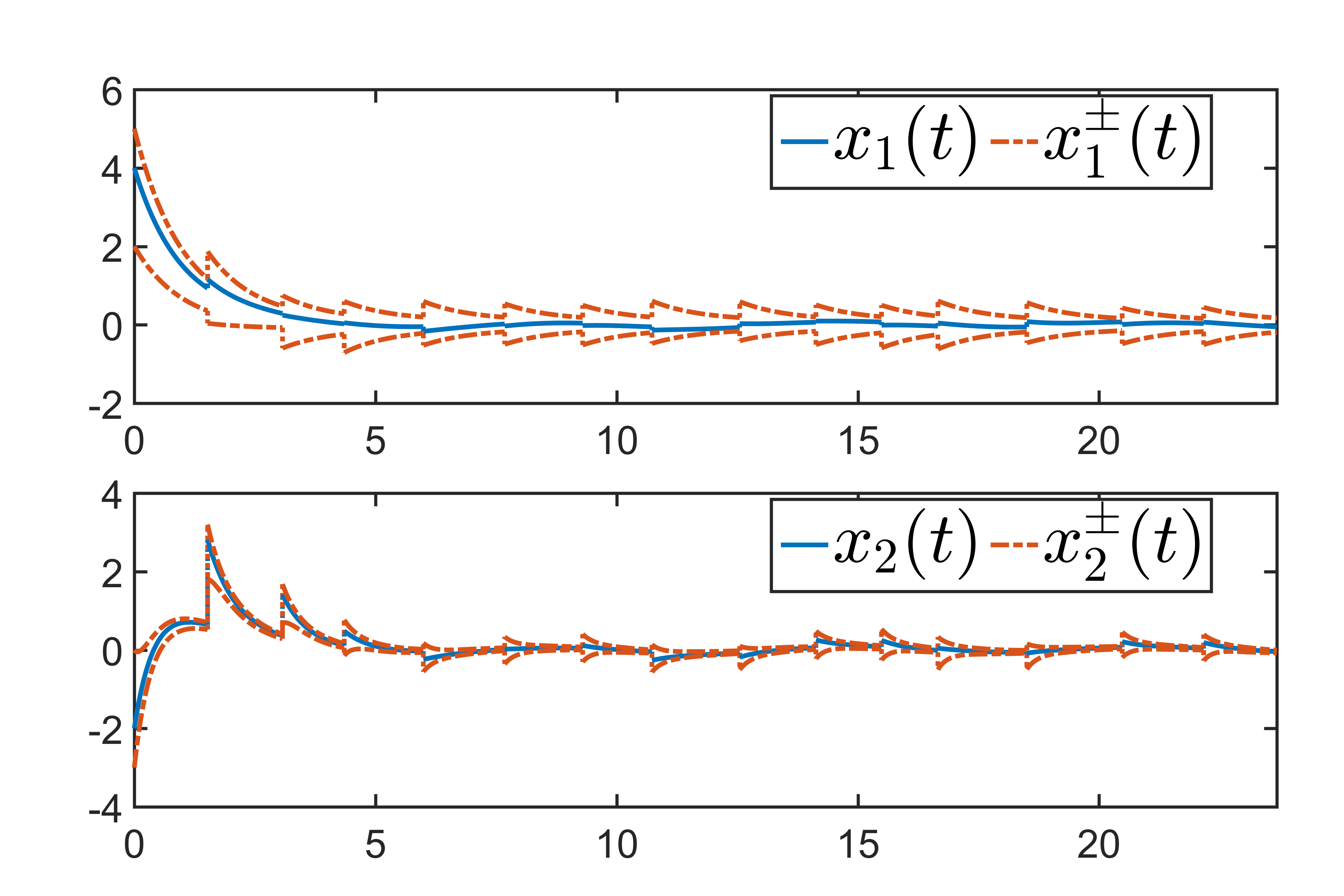}
  \caption{Trajectories of the system \eqref{eq:mainsyst}-\eqref{eq:ex1} and the interval observer \eqref{eq:obs}-\eqref{eq:L2}-\eqref{eq:Lex2} for some randomly chosen impulse times satisfying the minimum dwell-time $\bar{T}=1$.}\label{fig:states_minDT}
\end{figure}

\begin{figure}
  \centering
  \includegraphics[width=0.8\textwidth]{./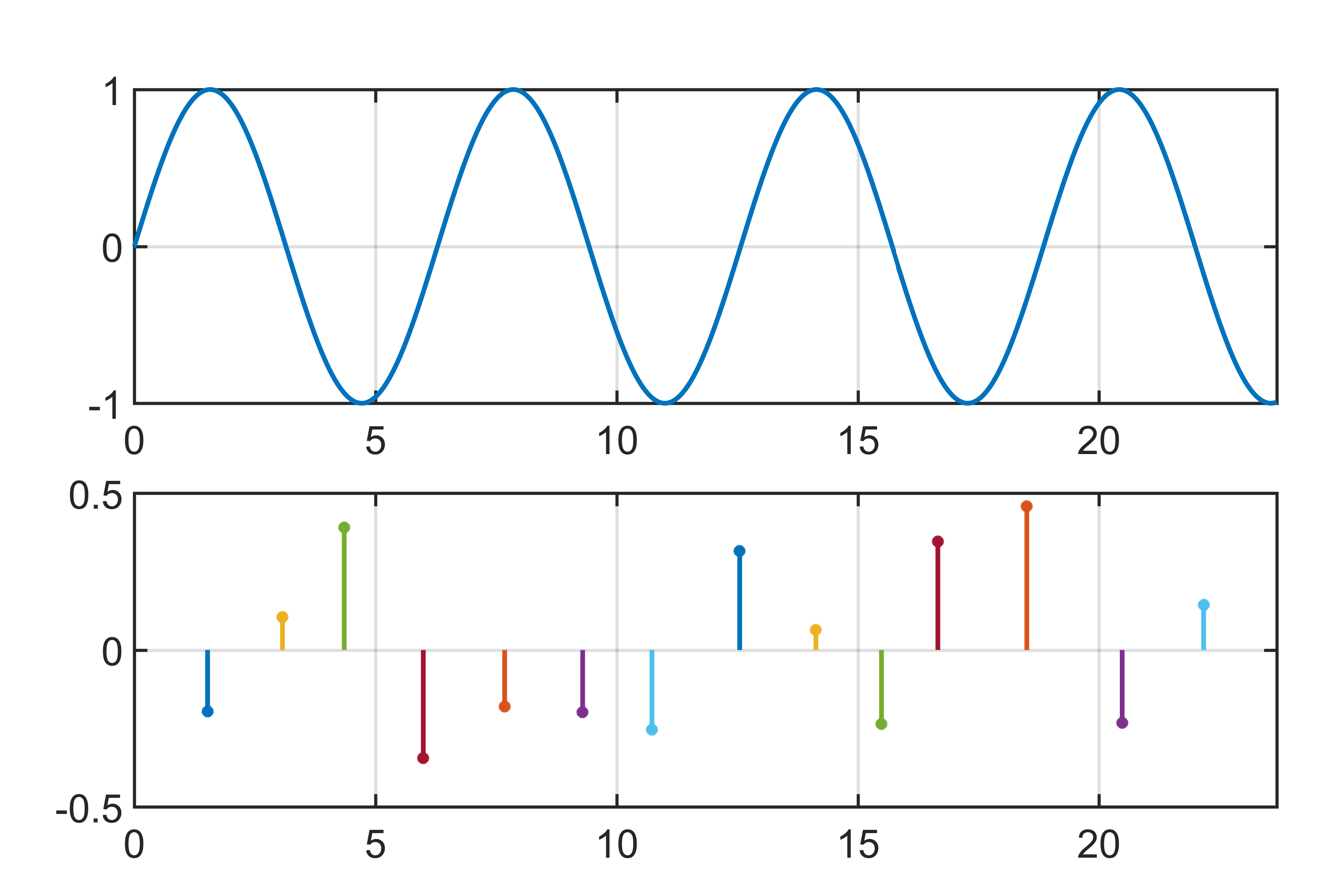}
  \caption{Trajectory of the continuous-time input $w_c$ (top) and the discrete-time input $w_d$ (bottom)}\label{fig:inputs_minDT}
\end{figure}

\subsubsection{Example 3. Minimum dwell-time}

Let us consider here the matrices given by
\begin{equation}\label{eq:ex3}
\begin{array}{l}
    A=\begin{bmatrix}
   -1&     0 &    1  &  -1\\
     1&    -2   &  0&     1\\
     1 &    0&    -3     &1\\
     0   &  1     &2    &-4
  \end{bmatrix}, E_c=\begin{bmatrix}
    0.1\\
    0\\
    -0.1\\
    0
  \end{bmatrix}, J=\begin{bmatrix}
     1&    -1     &0     &0\\
     1  &   3   &  0     &0\\
    -1&0     &1     &0\\
     0    & 1     &0     &1
  \end{bmatrix},  E_d=\begin{bmatrix}
    0.3\\
    0.3\\
    0\\0
  \end{bmatrix},\\
    C_c=\begin{bmatrix}
      0    & 1     &0     &0\\
     0     &0     &0     &1
    \end{bmatrix},C_d=\begin{bmatrix}
    0     &1     &0     &0\\
     1&     0     &0     &0
  \end{bmatrix}, F_c=F_d=\begin{bmatrix}
    0.1\\0
  \end{bmatrix}
\end{array}
\end{equation}
The disturbances are defined in the same way as in the previous example. Choosing a desired minimum dwell-time of $\bar T=1$ and solving the conditions of Theorem \ref{th:2} with polynomials of degree 4 yield the minimum value 3.0324 for $\gamma$ together with the observer gains
\begin{equation}\label{eq:Lex3a}
  L_d=\begin{bmatrix}
   -1&    0.7274\\
    3&    0.2573\\
   0  & -1.4829\\
   0 &  -1.3160
  \end{bmatrix}
\end{equation}
and
\begin{equation}\label{eq:Lex3b}
  \tilde L_c(\tau)=\begin{bmatrix}
 \tilde L_c^1(\tau) & \tilde L_c^2(\tau)
  \end{bmatrix}
\end{equation}
where
\begin{equation}
  \tilde L_c^1(\tau)=\begin{bmatrix}
    1.304\dfrac {- 0.0000001914- 0.1285\tau+ 0.6176{\tau}^{2}- 0.8546{\tau}^{3}+ 0.4655{\tau}^{4}}{ 9.886+ 1.287\tau+ 0.3609{\tau}^{2}- 1.010{\tau}^{3}+ 0.2236{\tau}^{4}}\\
    - 1.304\dfrac {- 0.0000001078- 0.2173\tau+ 1.044{\tau}^{2}- 1.614{\tau}^{3}+ 0.7871{\tau}^{4}}{- 3.148- 0.7717\tau+ 0.1618{\tau}^{2}- 1.682{\tau}^{3}+ 1.374{\tau}^{4}}\\
    \dfrac {3.123+ 0.7654\tau- 1.307{\tau}^{2}+ 3.584{\tau}^{3}- 2.042{\tau}^{4}}{- 3.123- 0.5617\tau+ 0.3280{\tau}^{2}- 2.071{\tau}^{3}+ 1.304{\tau}^{4}}\\
    \dfrac {- 0.0000001113- 0.2332\tau+ 1.120{\tau}^{2}- 1.732{\tau}^{3}+ 0.8442{\tau}^{4}}{ 0.1134- 0.01944\tau+ 0.4850{\tau}^{2}- 1.884{\tau}^{3}+ 2.419{\tau}^{4}}
  \end{bmatrix}
\end{equation}
and
\begin{equation}
  \tilde L_c^2(\tau)=\begin{bmatrix}
    1.304\dfrac {- 7.586- 0.8535\tau- 1.446{\tau}^{2}+ 3.884{\tau}^{3}- 2.712{\tau}^{4}}{ 9.886+ 1.287\tau+ 0.3609{\tau}^{2}- 1.010{\tau}^{3}+ 0.2236{\tau}^{4}}\\
    - 1.304\dfrac { 2.410+ 0.7141\tau- 1.194{\tau}^{2}+ 4.134{\tau}^{3}- 3.378{\tau}^{4}}{- 3.148- 0.7717\tau+ 0.1618{\tau}^{2}- 1.682{\tau}^{3}+ 1.374{\tau}^{4}}\\
    \dfrac { -3.119- 0.6836\tau+ 1.395{\tau}^{2}- 4.907{\tau}^{3}+ 3.622{\tau}^{4}}{- 3.123- 0.5617\tau+ 0.3280{\tau}^{2}- 2.071{\tau}^{3}+ 1.304{\tau}^{4}}\\
    \dfrac { 0.5467+ 0.4849\tau- 2.670{\tau}^{2}+ 4.127{\tau}^{3}+ 1.019{\tau}^{4}}{ 0.1134- 0.01944\tau+ 0.4850{\tau}^{2}- 1.884{\tau}^{3}+ 2.419{\tau}^{4}}
  \end{bmatrix}
\end{equation}
For information, the semidefinite program has 1473 primal variables, 429 dual variables and it takes 13.80 seconds to solve. To illustrate this result, we generate random impulse times satisfying the minimum dwell-time condition and we obtain the trajectories depicted in Fig.~\ref{fig:states_minDT2a} and Fig.~\ref{fig:states_minDT2b} where we can observe the ability of the interval observer to properly frame the trajectory of the system. The disturbance inputs are depicted in Fig.~\ref{fig:inputs_minDT2}.

\begin{figure}
  \centering
  \includegraphics[width=0.8\textwidth]{./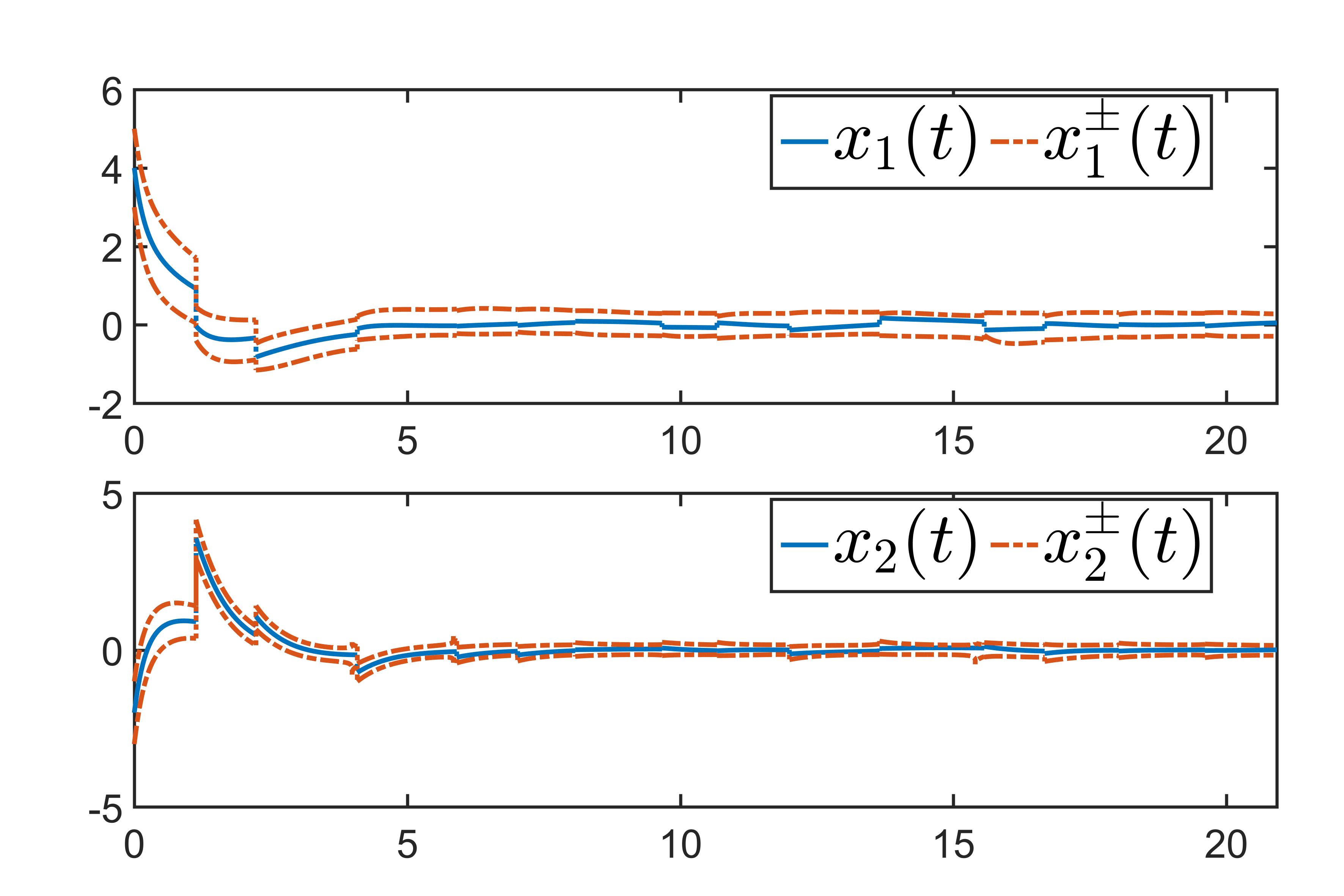}
  \caption{Trajectories of the system \eqref{eq:mainsyst}-\eqref{eq:ex3} and the interval observer \eqref{eq:obs}-\eqref{eq:L2}-\eqref{eq:Lex3a}-\eqref{eq:Lex3b} for some randomly chosen impulse times satisfying the minimum dwell-time $\bar{T}=1$.}\label{fig:states_minDT2a}
\end{figure}

\begin{figure}
  \centering
  \includegraphics[width=0.8\textwidth]{./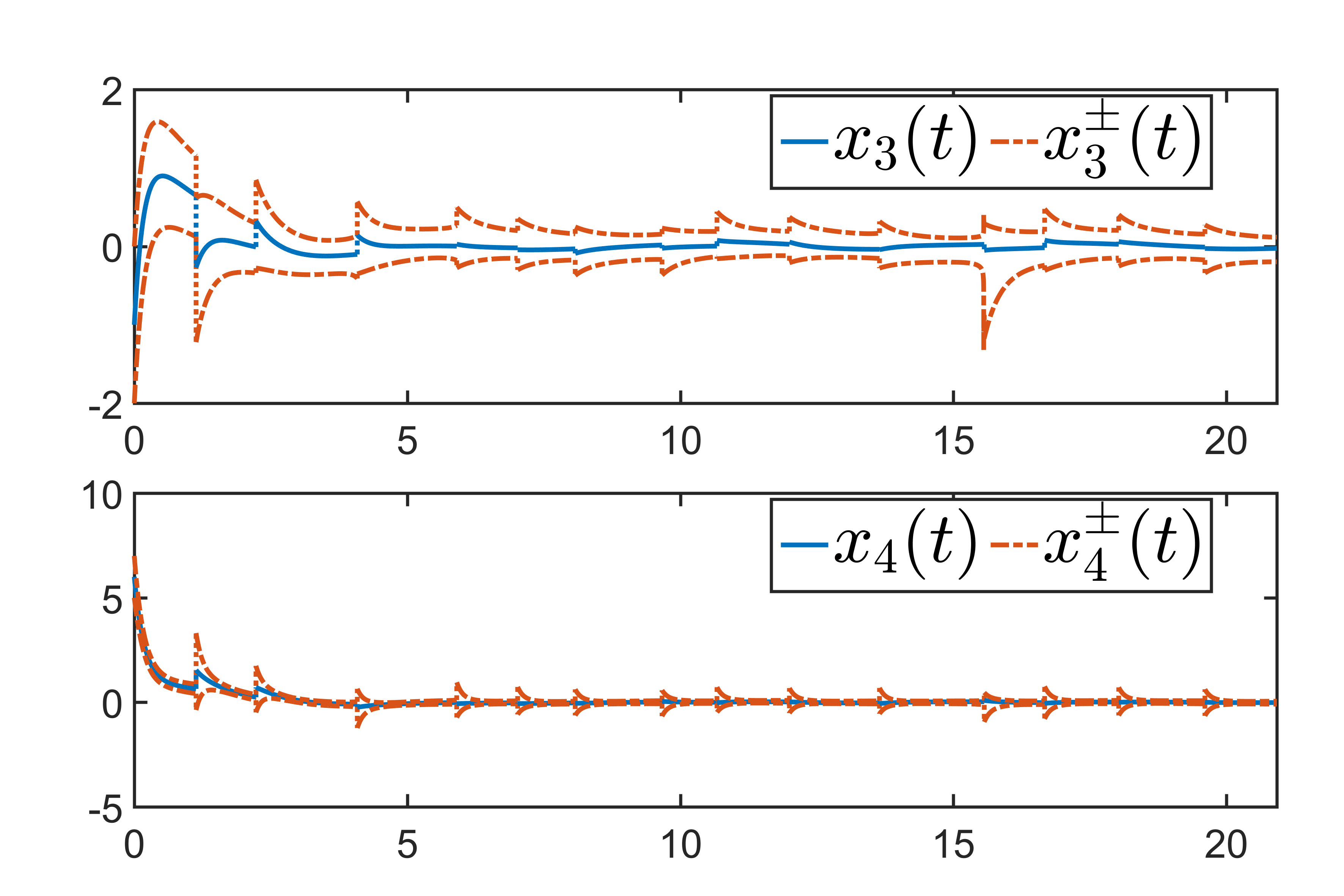}
  \caption{Trajectories of the system \eqref{eq:mainsyst}-\eqref{eq:ex1} and the interval observer \eqref{eq:obs}-\eqref{eq:L2}-\eqref{eq:Lex3a}-\eqref{eq:Lex3b} for some randomly chosen impulse times satisfying the minimum dwell-time $\bar{T}=1$.}\label{fig:states_minDT2b}
\end{figure}

\begin{figure}
  \centering
  \includegraphics[width=0.8\textwidth]{./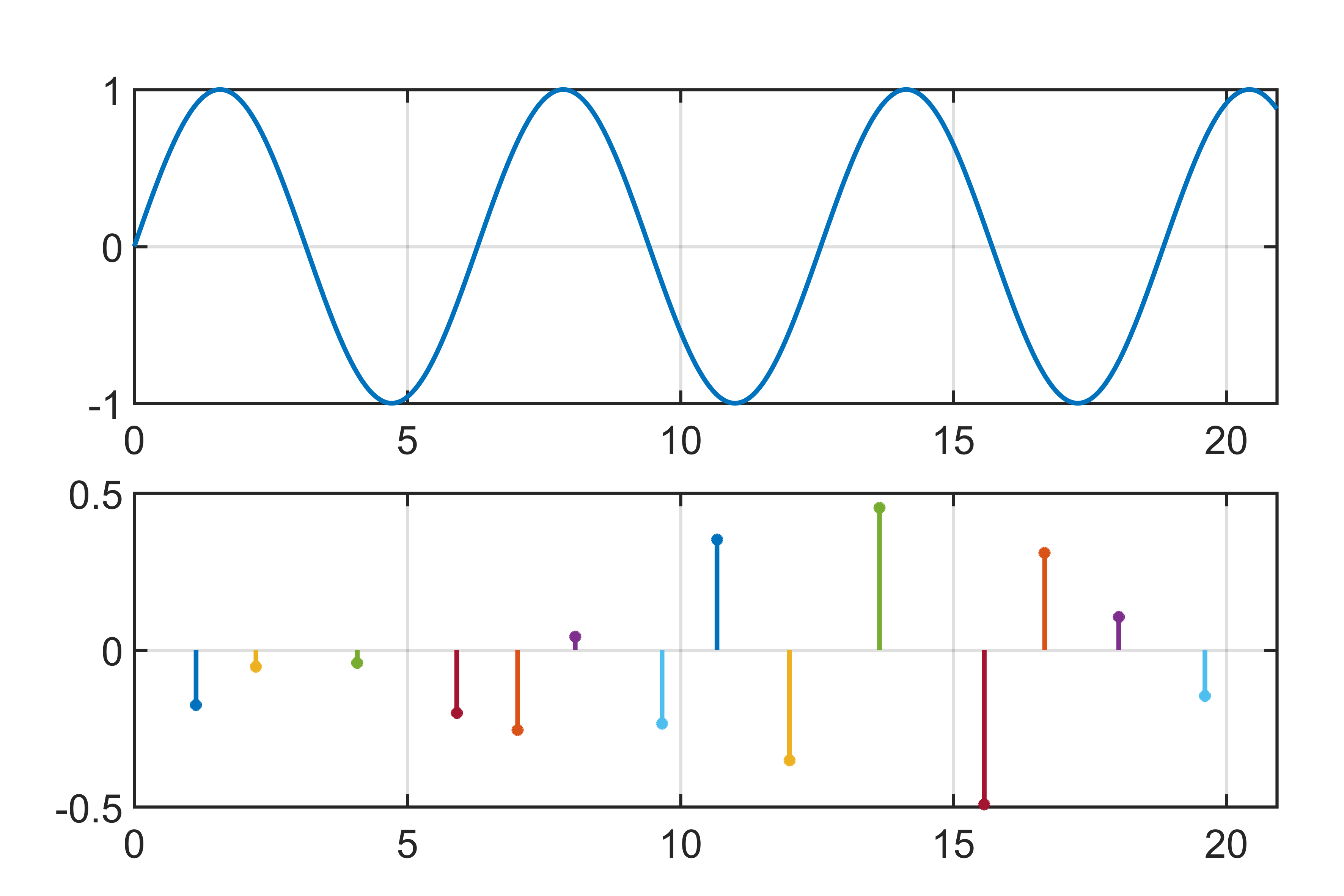}
  \caption{Trajectory of the continuous-time input $w_c$ (top) and the discrete-time input $w_d$ (bottom)}\label{fig:inputs_minDT2}
\end{figure}
}

\section{$L_1$-to-$L_1$ interval observation of linear switched systems}\label{sec:switched}

Let us consider here the switched system
\begin{equation}\label{eq:switched}
\begin{array}{rcl}
    \dot{\tilde{x}}(t)&=&\tilde{A}_{\sigma(t)}\tilde{x}(t)+\tilde{E}_{\sigma(t)}w(t)\\
     \tilde{y}(t)&=&\tilde{C}_{\sigma(t)}\tilde{x}(t)+\tilde{F}_{\sigma(t)}w(t)
\end{array}
\end{equation}
where $\sigma:\mathbb{R}_{\ge0}\mapsto\{1,\ldots,N\}$ is the switching signal, $\tilde x\in\mathbb{R}^n$ is the state of the system, $\tilde{w}\in\mathbb{R}^p$ is the exogenous input and $\tilde{y}\in\mathbb{R}^p$ is the measured output. The switching signal $\sigma$ is assumed to take values in the set $\{1,\ldots,N\}$ and to change values at the times in the sequence $\{t_k\}_{k\ge1}$. This system can be rewritten into the following impulsive system with multiple jump maps as in \cite{Briat:15i,Briat:16c}
\begin{equation}\label{eq:swimp}
\begin{array}{rcl}
    \dot{x}(t)&=&\diag_{i=1}^N(\tilde{A}_{i})x(t)+\col_{i=1}^N(\tilde{E}_{i})w(t)\\
     y(t)&=&\diag_{i=1}^N(\tilde{C}_{i})x(t)+\col_{i=1}^N(\tilde{F}_{i})w(t)\\
     x(t_k^+)&=&J_{ij}x(t_k),\ i,j=1,\ldots,N,\ i\ne j
\end{array}
\end{equation}
where $J_{ij}:=(b_ib_j^T)\otimes I_n$ and $\{b_1,\ldots,b_N\}$ is the standard basis for $\mathbb{R}^N$. It is important to stress that in the above formulation only the part of the state $x(t)$ that evolves according to the subsystem $\sigma(t)$ is meaningful. In this regard, the others can be discarded when plotting the trajectories of the switched system.

Because of the particular structure of the system, we can define w.l.o.g.  an interval observer of the form \eqref{eq:obs} for the system \eqref{eq:swimp} with the gains $L_c(t)=\diag_{i=1}^N(L_c^i(t))$ and $L_d^{ij}=0$. The error dynamics is then given in this case by
\begin{equation}\label{eq:error_switched}
\begin{array}{rcl}
    \dot{e}^\bullet(t)&=&\diag_{i=1}^N(\tilde{A}_{i}-L_c^i(t)\tilde{C}_i)e^\bullet(t)+\col_{i=1}^N(\tilde{E}_{i}-L_c^i(t)\tilde{F}_i)\delta^\bullet(t)\\
   e^\bullet(t_k^+)&=&\left[(b_ib_j^T)\otimes I_n\right]e^\bullet(t_k)\\
   e_c^\bullet(t)&=&\left[I_n\otimes  M\right]e^\bullet(t)
    \end{array}
\end{equation}
where $M\in\mathbb{R}^{n\times n}_{\ge0}$ is a nonzero weighting matrix.

\noindent\blue{Similarly as in the impulsive systems case, we consider the following structure for the gains of the observer:
\begin{equation}\label{eq:L3}
  L_c^i(t)=\left\{\begin{array}{ll}
  \tilde{L}_c^i(t-t_k)& \textnormal{if }t\in(t_k,t_{k}+\tau]\\
  \tilde{L}_c^i(\bar T)& \textnormal{if }t\in(t_k+\bar{T},t_{k+1}]
  \end{array}\right.
\end{equation}
where $\tilde{L}_c^i:\mathbb{R}_{\ge0}\mapsto\mathbb{R}^{n\times q_c}$, $i=1,\dots,N$ are functions to be determined. We can now formally state the considered interval observation problem:}
\begin{problem}\label{problem3}
Find an interval observer of the form \eqref{eq:swimp} (i.e. a matrix-valued function $L_c(\cdot)$ of the form \eqref{eq:L2}  such that the error dynamics \eqref{eq:error_switched} is
  \begin{enumerate}[(a)]
    \item state-positive, that is, for all $i=1,\ldots,N$ we have that
    \begin{itemize}
      \item $\tilde A_i-\tilde L_c^i(\tau) \tilde C_i$ is Metzler for all $\tau\in[0,\bar{T}]$,
      \item $\tilde E_i-\tilde L_c^i(\tau) \tilde F_i $ is nonnegative for all $\tau\in[0,\bar{T}]$,
    \end{itemize}
    \item asymptotically stable under minimum dwell-time $\bar T$ when $w_c\equiv0$, and
    \item satisfies the condition
    \begin{equation}
    ||e_c^\bullet||_{L_1}<\gamma||\delta^\bullet||_{L_1},\ \bullet\in\{-,+\}
  \end{equation}
  for all $w_c\in L_1$ when $x_0=0$ and where $M\in\mathbb{R}^{n\times n}_{\ge0}$.
  \end{enumerate}
\end{problem}

The following result provides a sufficient conditions on whether there exists a solution to Problem \ref{problem3}.
\begin{theorem}\label{th:switched}
There exists a differentiable matrix-valued functions $X_i:[0,\bar T]\mapsto\mathbb{D}^n$, $X_i(\bar T)\succ0$, $i=1,\ldots,N$, matrix-valued functions $U_i:[0,\bar T]\mapsto\mathbb{R}^{n\times q_c}$, $i=1,\ldots,N$, and scalars $\eps,\alpha,\gamma>0$ such that the conditions
\begin{subequations}\label{eq:th2a}
\begin{alignat}{4}
           X_i(\tau)\tilde A_i-U_i(\tau)\tilde C_i+\alpha I_n&\ge0\\
           X_i(\tau)\tilde E_i-U_i(\tau)\tilde F_i&\ge0
  \end{alignat}
\end{subequations}
and
    \begin{subequations}\label{eq:th2b}
\begin{alignat}{4}
           \mathds{1}^T\left[X_i(\bar T)\tilde A_i-U_i(\bar T)\tilde C_i+\eps I_n\right]+\mathds{1}^TM&\le0\\
           \mathds{1}^T\left[\dot{X}_i(\tau)+X_i(\tau)\tilde A_i-U_i(\tau)\tilde C_i\right]+\mathds{1}^TM&\le0\\
            \mathds{1}^T\left[X_j(\bar T)-X_i(0)+\eps I\right]&\le0\\
            \mathds{1}^T\left[X_i(\bar T)\tilde E_i-U_i(\bar T)\tilde F_i\right]-\gamma \mathds{1}^T&\le0\\
            \mathds{1}^T\left[X_i(\tau)\tilde E_i-U_i(\tau)\tilde F_i\right]-\gamma \mathds{1}^T&\le0
\end{alignat}
\end{subequations}
hold for all $\tau\in[0,\bar T]$ and all $i,j=1,\ldots,N$.  Then, there exists an interval observer of the form \eqref{eq:obs}-\eqref{eq:L2} that solves Problem \ref{problem3} and suitable observer gains are given by
\begin{equation}
  \tilde{L}_i(\tau)= X_i(\tau)^{-1}U_i(\tau),\ i=1,\ldots,N.
\end{equation}
\end{theorem}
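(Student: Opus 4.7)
The plan is to reduce the switched system problem to an instance of the hybrid minimum dwell-time $L_1/\ell_1$ result (Theorem \ref{th:minDTL1}) applied to the augmented impulsive system \eqref{eq:swimp}, exploiting the block-diagonal structure of both the dynamics and the candidate Lyapunov function.

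First, I would introduce the change of variables $\tilde{L}_c^i(\tau) = X_i(\tau)^{-1}U_i(\tau)$ and $\zeta_i(\tau) = X_i(\tau)\mathds{1}$ for $i=1,\ldots,N$. Since $X_i(\tau)$ is diagonal and $X_i(\bar T)\succ 0$, the inversion is well-defined (and positivity on the full interval $[0,\bar T]$ follows from the differential inequalities together with the boundary condition, as in the proofs of Theorems \ref{th:1}--\ref{th:2}). The first block of conditions then encodes the state-positivity requirements of Problem \ref{problem3}(a): the diagonal structure of $X_i(\tau)$ turns $X_i(\tau)\tilde A_i - U_i(\tau)\tilde C_i + \alpha I_n \ge 0$ into $\tilde A_i - \tilde L_c^i(\tau)\tilde C_i + \alpha X_i(\tau)^{-1} \ge 0$, i.e. Metzler-ness of $\tilde A_i - \tilde L_c^i(\tau)\tilde C_i$ on $[0,\bar T]$; extension to all $\tau\ge\bar T$ follows from the freezing convention \eqref{eq:L3}. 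The second positivity inequality similarly yields nonnegativity of $\tilde E_i - \tilde L_c^i(\tau)\tilde F_i$.

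Next, I would set up the composite Lyapunov/storage function $V(t,x(t)) = \zeta(t-t_k)^T x(t)$ on $(t_k,t_{k+1}]$ with $\zeta(\tau) := \col_{i=1}^N(\zeta_i(\tau))$ applied to the augmented state of \eqref{eq:swimp}. Because $\diag_i(\tilde A_i - \tilde L_c^i(\tau)\tilde C_i)$ is block-diagonal and $\col_i(\tilde E_i - \tilde L_c^i(\tau)\tilde F_i)$ is block-column, and because the weighting matrix $I_N\otimes M$ is block-diagonal, the continuous-time storage inequalities of Theorem \ref{th:minDTL1} decouple into exactly the per-mode inequalities listed in \eqref{eq:th2b}: namely, for each $i$, $\mathds{1}^T[\dot X_i(\tau)+X_i(\tau)\tilde A_i - U_i(\tau)\tilde C_i] + \mathds{1}^T M \le 0$ and $\mathds{1}^T[X_i(\tau)\tilde E_i - U_i(\tau)\tilde F_i] - \gamma\mathds{1}^T\le 0$, together with their $\tau=\bar T$ counterparts needed for the freezing regime.

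The crux is the jump condition. For a switch from mode $j$ to mode $i$ at time $t_k$ with $T_{k-1}\ge\bar T$, the jump $J_{ij}=(b_ib_j^T)\otimes I_n$ acts on the augmented state by copying the $j$-block into the $i$-slot and zeroing the rest. Evaluated against $\zeta(0) = \col_\ell(\zeta_\ell(0))$, this yields the scalar expression $\zeta_i(0)^T x_j(t_k)$, while the pre-jump value of $V$ is $\zeta_j(T_{k-1})^T x_j(t_k)$ since only the $j$-block is meaningful before the switch. Requiring a strict decrease of $V$ at the jump for every nonnegative $x_j(t_k)$, and using the freezing convention to evaluate $\zeta_j$ at $\bar T$, gives exactly $\zeta_j(\bar T)^T - \zeta_i(0)^T + \varepsilon \mathds{1}^T \le 0$ componentwise, which is the hypothesis $\mathds{1}^T[X_j(\bar T)-X_i(0)+\varepsilon I]\le 0$ (quantified over all ordered pairs $i,j$). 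With all dissipation inequalities in place, the summation/telescoping argument from the proof of Theorem \ref{th:rangeDTL1} (reused for Theorem \ref{th:minDTL1}) gives asymptotic stability under minimum dwell-time $\bar T$ and the $L_1$-to-$L_1$ gain bound $\gamma$ of Problem \ref{problem3}(c).

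The main obstacle is the jump step: one must correctly interpret the action of $J_{ij}$ on the composite storage function, keep track of which index is pre- versus post-switch, and verify that the condition has to be enforced for \emph{every} ordered pair $(i,j)$ (not just $i\ne j$), so that arbitrary switching sequences compatible with the dwell-time constraint are covered. Once this bookkeeping is done, the remaining arguments are routine adaptations of the proofs of Theorems \ref{th:rangeDTL1}--\ref{th:minDTL1} and Theorem \ref{th:2}, so the proof can legitimately be stated as an application of those results to the augmented impulsive system.
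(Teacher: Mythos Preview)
Your approach is exactly the paper's: the paper's own proof is the single line ``The proof is performed by direct substitution,'' and you have spelled out precisely that substitution---the changes of variables $\tilde L_c^i=X_i^{-1}U_i$, $\zeta_i=X_i\mathds{1}$, and the application of Theorem~\ref{th:minDTL1} to the augmented impulsive reformulation \eqref{eq:swimp}, with the block-diagonal structure decoupling the flow and jump conditions mode by mode. The only place to be careful is the one you already flagged (the index/sign bookkeeping at the jump $J_{ij}$: decrease across a switch $j\to i$ reads $\zeta_i(0)-\zeta_j(\bar T)+\eps\mathds{1}\le0$), so double-check that step when you write it out in full.
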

\begin{proof}
  The proof is performed by direct substitution. See also \cite{Briat:16c}.
\end{proof}

\blue{
\begin{example}
Let us consider the system \eqref{eq:switched} with the matrices.
\begin{equation}\label{eq:ex3}
\begin{array}{l}
    \tilde A_1=\begin{bmatrix}
    -1 & 0\\
    -1 & -2
  \end{bmatrix}, \tilde E_1=\begin{bmatrix}
    0.1\\
    -0.1
  \end{bmatrix}, \tilde A_2=\begin{bmatrix}
-1 & -1\\
1 & -6
  \end{bmatrix}, \tilde E_2=\begin{bmatrix}
    0.5\\
    0
  \end{bmatrix},
    \\ \tilde C_1=\begin{bmatrix}
    1 & 0
  \end{bmatrix},\tilde C_2=\begin{bmatrix}
    0 & 1
  \end{bmatrix}, \tilde F_c=\tilde F_d=0.1.
\end{array}
\end{equation}
Solving for the conditions in Theorem \ref{th:switched} with polynomials of degree 2 and a minimum dwell-time equal to $\bar T=1$, we get the minimum $\gamma=0.80023$. The number of primal/dual variables is 268/93 and the problem is solved in 2.86 seconds. The observer gains are not given for brevity. The trajectories of the system and the interval observer are depicted in Fig.~\ref{fig:states_minDT_switched}. The disturbance input and the switching signal are depicted in Fig.~\ref{fig:inputs_minDT_switched}.

\begin{figure}
  \centering
  \includegraphics[width=0.8\textwidth]{./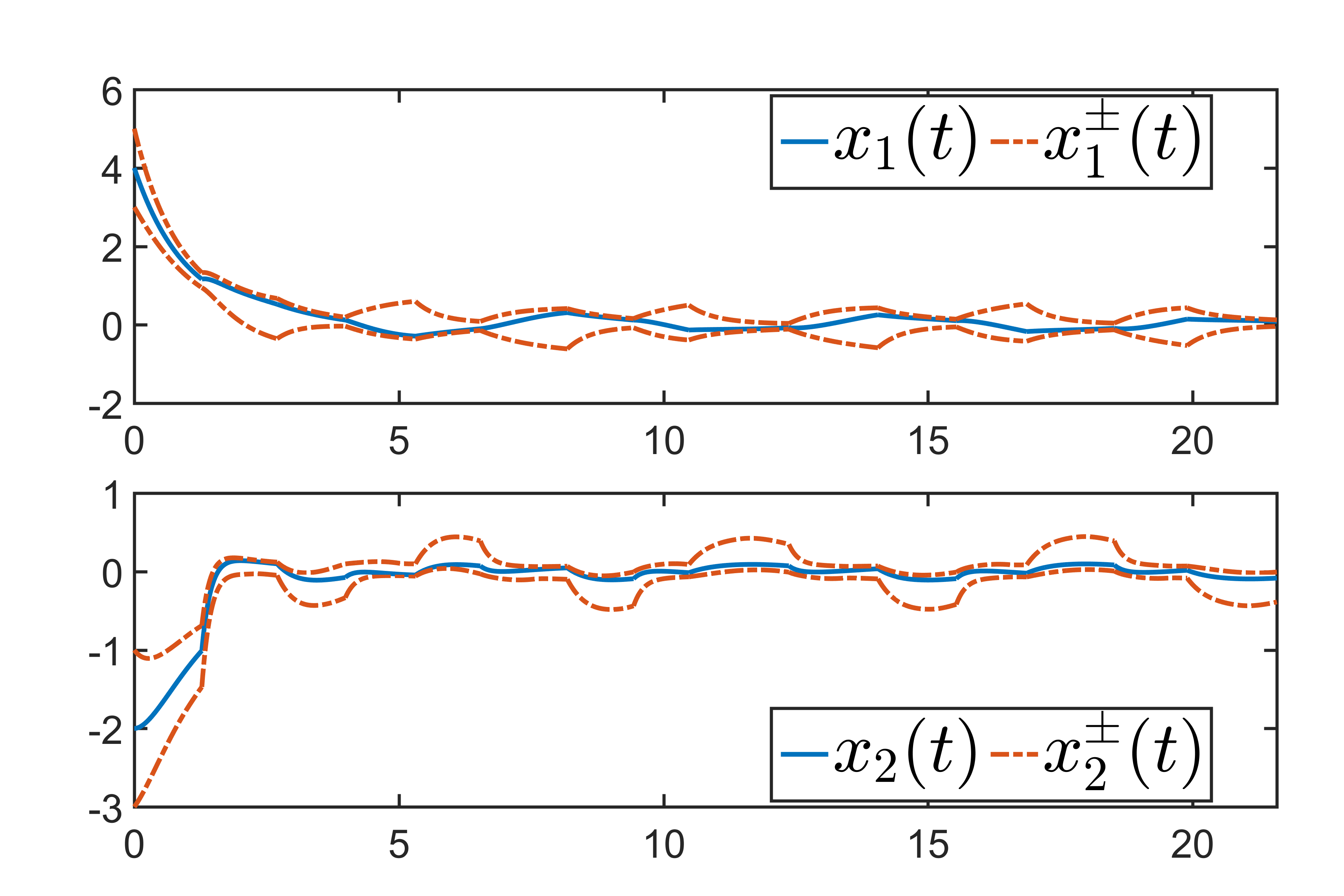}
  \caption{Trajectories of the system \eqref{eq:switched}-\eqref{eq:ex3} and the interval observer \eqref{eq:swimp} for some randomly chosen impulse times satisfying the minimum dwell-time $\bar T=1$.}\label{fig:states_minDT_switched}
\end{figure}

\begin{figure}
  \centering
  \includegraphics[width=0.8\textwidth]{./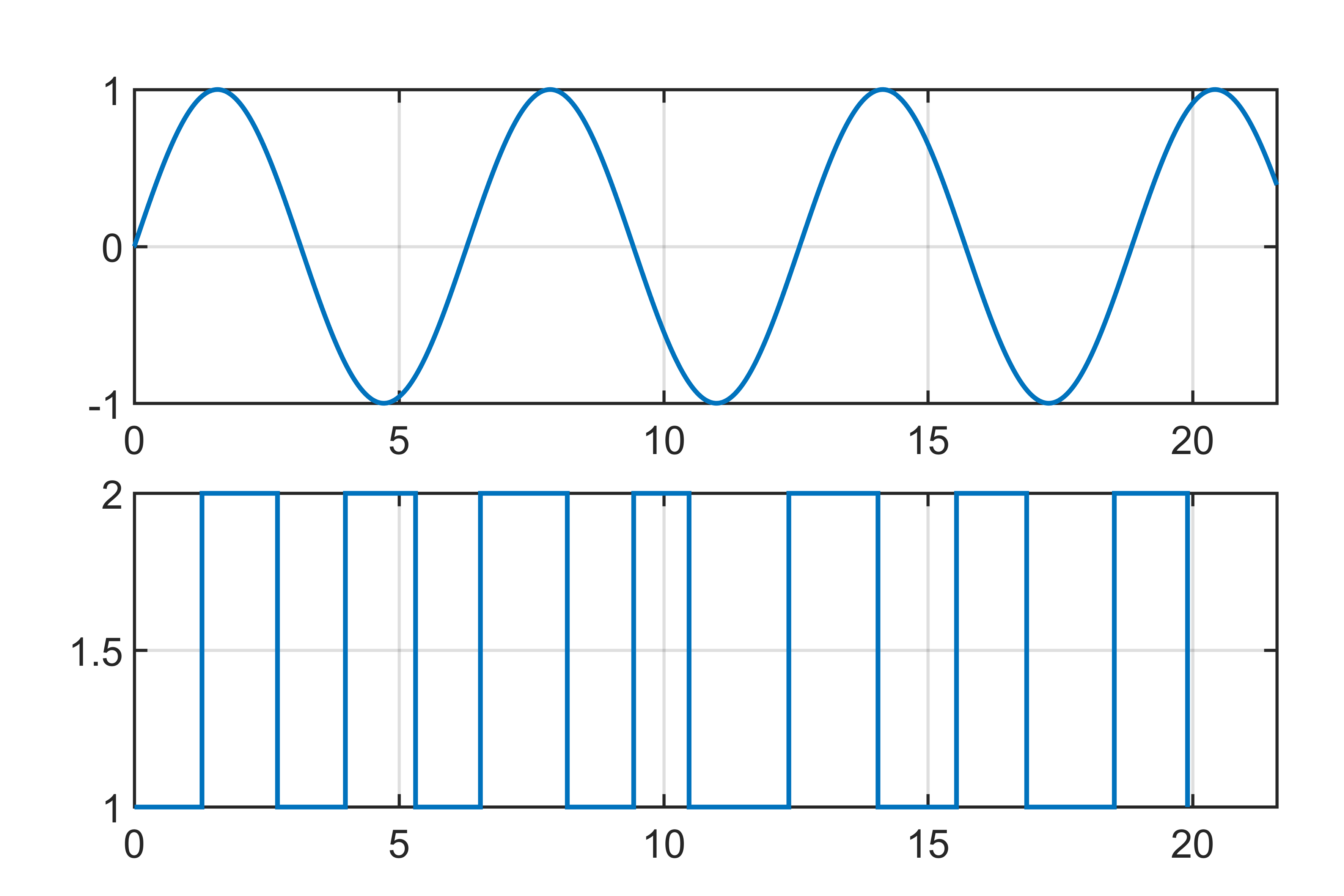}
  \caption{Trajectory of the continuous-time input $w_c$ (top) and the switching signal $\sigma$ (bottom)}\label{fig:inputs_minDT_switched}
\end{figure}
\end{example}}

\section{Future works}

An interesting problem is whether the design of $L_\infty/\ell_\infty$-to-$L_\infty/\ell_\infty$ interval observers for linear impulsive and switched systems can be similarly formulated. The first difficulty is that in \cite{Briat:11h,Briat:15g}, the $L_\infty$-gain is computed as the $L_1$-gain of the transposed system. It is unclear whether this is also the case for linear positive impulsive systems and is worthy of investigation. The next problem will be obtaining linear conditions as it is also unclear whether the procedure considered in \cite{Briat:15g} will still be applicable.

\blue{Potential improvements to this approach lie at the level of its conservatism reduction. The conservatism can be reduced by (i) considering more complex observer structures and (ii) the use of changes of variables, as it is customary in the literature on interval-observers, and (iii) the consideration of higher-order polynomial Lyapunov functions, possibly of odd degree. The positive point is that the obtained conditions can be readily adapted to cope with the extensions (i) and (ii). The derivation design conditions using higher order polynomial Lyapunov functions is more complex. Indeed, the benefits of using linear ones lies in the fact that the hybrid $L_1/\ell_1$-gain can be easily characterized, that structural constraints on the gains of the observer can be easily incorporated, and that the computational complexity is kept low. This is not the case for higher-order Lyapunov functions, even quadratic ones. This latter point is left for future research.}

%
%
%



\end{document}